%% file: main.tex
\documentclass[12pt]{amsart}
\usepackage[letterpaper,margin=1in]{geometry}
\usepackage{amsmath,amssymb,amsthm}
\usepackage{graphicx}
\usepackage{enumitem}
\usepackage{microtype}
\usepackage{xcolor}
\usepackage[round]{natbib}
\usepackage[colorlinks,linkcolor=black,citecolor={blue!50!black},urlcolor={blue!80!black}]{hyperref}
\usepackage[capitalise,noabbrev]{cleveref}
\usepackage{subcaption}
\usepackage{braket}
\usepackage{booktabs}

\input{macros}

\makeatletter
\g@addto@macro\@setauthors{%
  \vspace{1.5em}%
  {\centering\normalsize\itshape Department of CS, ETH Z\"urich, Switzerland\par}%
  \vspace{0.5em}%
  {\centering\small\texttt{\{kfotopoulos, mhelcig, kdeck, jalsbach\}@ethz.ch}\par}}
\makeatother

\title[Non-Acceleration of the Heavy-Ball Method in 1D]%
      {Non-Acceleration of the Heavy-Ball Method in 1D}

\author{Konstantinos Fotopoulos}
\author{Michael Helcig}
\author{Karl Deck}
\author{Jannis Alsbach}

\date{}

\begin{document}

\begin{abstract}
\input{sections/abstract}
\end{abstract}

\maketitle

\markboth{\shorttitle}{\shorttitle}

\input{sections/intro}
\input{sections/setup}
\input{sections/analysis}
\input{sections/projected}
\input{sections/general}
\input{sections/connections}
\input{sections/conclusion}

\section*{Acknowledgments}
This work was conducted as part of the student seminar \textit{AI for Mathematics and Optimization} at ETH Z\"urich. We thank our supervising teaching assistant, Chung-En Tsai, for his valuable feedback and guidance throughout the seminar. We also acknowledge his independent, concurrent discovery of a four-dimensional counterexample similar to the one presented in Section 5. We thank Andrey Kharitenko for pointing out the connection between our one-dimensional cycling construction and the literature on absolute stability of Lur'e systems. We further acknowledge that Prof. Aymeric Dieuleveut and his collaborators independently and concurrently resolved the question of one-dimensional non-acceleration, as communicated to us privately.

KF acknowledges financial support for his master's studies through scholarships awarded by the Union of Greek Shipowners and the Onassis Foundation.

\section*{Declaration on usage of AI}

This work was conducted as part of the seminar \textit{AI for Mathematics and Optimization}, which explored the use of artificial intelligence in mathematical research. Throughout the project, we employed AI tools for mathematical exploration, proof development, and verification, combining human and AI-generated contributions.

The Fourier-based analysis, the derivation of the augmented cycling region, and the connection to the fractional sets were conceived and developed by the authors. The angular characterization in Lemma 3.1(ii) was conceived by the authors, initially proved with AI assistance, and subsequently established through a simpler, rigorous proof developed by the authors. Example 3.2 was generated by AI and independently verified by the authors. The rotation-and-projection construction of Theorem 4.1, including the observation that the ordering of the projected points remains unchanged under suitable rotations, originated with the authors. Its proof was initially developed with AI assistance and subsequently refined by the authors. The counterexample in Section 5 was generated by AI and verified by the authors, while the remaining results in that section were developed through iterative human–AI collaboration. The connection to Lur'e systems in Section 6 was identified by Andrey Kharitenko and elaborated by the authors.
The Lean 4 formalization was generated entirely by AI. The authors verified its correctness and its correspondence to the mathematical statements and proofs presented in the paper.

The authors take full responsibility for the mathematical correctness and presentation of the results.

\bibliographystyle{plainnat}
\bibliography{ref}

\appendix
\input{appendix/dft-interpolation}
\input{appendix/root-exclusion}

\end{document}

%% file: macros.tex
\theoremstyle{plain}
\newtheorem{theorem}{Theorem}[section]
\newtheorem{lemma}[theorem]{Lemma}
\newtheorem{corollary}[theorem]{Corollary}
\newtheorem{proposition}[theorem]{Proposition}
\newtheorem{conjecture}[theorem]{Conjecture}
\newtheorem{openproblem}[theorem]{Open Problem}

\theoremstyle{definition}
\newtheorem{definition}[theorem]{Definition}
\newtheorem{example}[theorem]{Example}

\theoremstyle{remark}

\crefname{openproblem}{Open Problem}{Open Problems}
\Crefname{openproblem}{Open Problem}{Open Problems}
\crefname{conjecture}{Conjecture}{Conjectures}
\Crefname{conjecture}{Conjecture}{Conjectures}

\newcommand{\R}{\mathbb{R}}
\newcommand{\Cx}{\mathbb{C}}
\newcommand{\Z}{\mathbb{Z}}

\newcommand{\interior}{\operatorname{int}}

\newcommand{\llb}{[\![}
\newcommand{\rrb}{]\!]}

\newcommand{\Fml}{\mathcal{F}_{\mu,L}}
\newcommand{\Qml}{\mathcal{Q}_{\mu,L}}

\newcommand{\Ocv}{\Omega_{\mathrm{cv}}}
\newcommand{\OKm}[2]{\Omega^{\circ}_{#1,#2}}
\newcommand{\Orou}{\Omega_{\circ\text{-}\mathrm{cycle}}}
\newcommand{\OKrou}[1]{\Omega_{#1\text{-}\circ\text{-}\mathrm{cycle}}}
\newcommand{\Oaug}{\Omega_{\mathrm{aug}\text{-}\circ}}

\newcommand{\Otvl}{\Omega_{\mathrm{TVL}}}
\newcommand{\Ocyc}[1]{\Omega^{(#1)}_{\mathrm{Cycle}}}
\newcommand{\OCycle}{\Omega_{\mathrm{Cycle}}}
\newcommand{\OKcyc}[2]{\Omega^{(#1)}_{#2\text{-}\mathrm{cycle}}}


%% file: sections/abstract.tex
Despite having an asymptotically optimal rate of convergence on convex quadratics, the acceleration of Polyak's heavy-ball method and a characterization of its dynamics within more general settings have remained long standing open problems. Recently, the work of Goujaud et al. [Math. Program. (2025)]
made progress regarding acceleration in the context of smooth, strongly convex minimization: for a small enough inverse condition number, either the method does not accelerate on quadratics, or there exists a smooth strongly convex function such that heavy-ball cycles over the roots of unity. The constructed functions have two variables, thus leaving the question of acceleration over univariate functions open. In this work, we use a Fourier-based analysis to i) provide a proof for the non-acceleration of the heavy-ball method over the class of univariate smooth strongly convex functions. We also ii) provide a strict enlargement of the previous root-of-unity cycling region, with insights regarding the ``fractional" sets of Goujaud et al. [arXiv:2502.19916], 
and iii) give a higher dimensional cycling instance which is not covered by root-of-unity cycling. 

%% file: sections/intro.tex
\section{Introduction}\label{sec:intro}

The efficiency of first-order methods in smooth, strongly convex optimization is governed by the condition number of the function to be minimized. Accelerated, momentum-type methods seek to mitigate the impact of the condition number on the rate of convergence compared to that of vanilla gradient descent. Among them, Polyak's heavy-ball method \citep{polyak1964some} is a canonical example, being asymptotically optimal for convex quadratic objectives. For a differentiable function \(f:\R^d \to \R\), initial points \((x_0, x_1)\), a step size \(\gamma\), and a momentum parameter \(\beta\), the heavy-ball update is given by
\begin{equation}
\label{eq:HB} \tag{HB}
x_{t+1} = x_t - \gamma \nabla f(x_t) + \beta(x_t - x_{t-1}). 
\end{equation}
On the class \(\Qml\) of \(L\)-smooth, \(\mu\)-strongly convex quadratics and under suitable choices of the parameters \((\gamma, \beta)\), \eqref{eq:HB} achieves the asymptotically optimal accelerated rate \(O\Big( \left(\frac{1-\sqrt{\kappa}}{1+\sqrt{\kappa}}\right)^t\Big)\), with \(\kappa := \mu / L\) the inverse condition number. 

The last rate matches the lower bound of \citet{nemirovski1983problem} for the class \(\Fml\) of general \(L\)-smooth, \(\mu\)-strongly convex functions. For this class of functions, however, the behavior of heavy-ball is not as clear. \citet{ghadimi2015global} prove convergence of \eqref{eq:HB} for a set of parameters \((\gamma, \beta)\) that does not achieve acceleration. \citet{lessard2016analysis} provide an explicit univariate function for which \eqref{eq:HB}, tuned optimally for quadratics, cycles over three points. Recently, \citet{goujaud2025provable} show that acceleration is not possible in dimension \(d \geq 2\): for every choice of parameters \((\gamma, \beta)\), either \eqref{eq:HB} does not accelerate on \(\Qml\), or there exists a \(d\)-variate function \(f \in \Fml(\R^d)\) for which \eqref{eq:HB} cycles over the \(K\)-th roots of unity for some \(K \geq 3\), traversed in order. Since these root-of-unity cycles are of affine dimension two, the proof does not directly apply to the subclass \(\Fml(\R) \subseteq \Fml\) of the univariate functions. The difficulty in dimension 1 stems from the fact that one-dimensional cycling does not allow for the symmetry existent in root-of-unity cycling when one applies smooth strongly convex interpolation conditions \citep{taylor2017smooth}. Whether the method accelerates on the class \(\Fml(\R)\) is stated as an open problem by \citet{goujaud2025provable, goujaud2025open}. \citet{goujaud2025provable} also provide an equivalent semi-definite program characterizing general cycling. With the dimension free, Section 6 therein also provides a cycling condition via an averaging argument. However, they do not give a \emph{closed-form} characterization of general cycling. Numerically, the parameter regions leading to one-dimensional and general cycling are indistinguishable from the region of root-of-unity cycling, suggesting equality of the three regions. 

Another observation includes the so-called ``fractional sets" of \citet{goujaud2025open}, which are obtained by plugging in fractional values of the cycle length into the quadratic equation characterizing root-of-unity cycling obtained in \citep{goujaud2025provable}. Numerically, these form the complement of the convergence region of \citet{taylor2018lyapunov}. 

Hence, non-acceleration in dimension 1, a closed-form characterization of general cycling, and the connection of cycling to the convergence set of \citet{taylor2018lyapunov} all remain open. To that end, we contribute as follows: 
\begin{enumerate}[label=(\roman*), leftmargin=2.4em, itemsep=3pt, topsep=3pt, parsep=0pt]
\item \emph{Augmented characterization and binding constraints.}
We consider the DFT decomposition of a general cycle to obtain an initial equivalent condition for cycling in two dimensions. For cycles over the roots of unity with \(m\)-jumps, the condition reduces equivalently to the non-negativity of a system of residuals. For \(m=1\), this recovers the region \(\Orou\) obtained by \citep{goujaud2025provable}, while the union over all \(m\) gives the strictly larger region \(\Oaug\). We argue that the ``fractional sets" of \cite{goujaud2025open} correspond to non-binding constraints of the above system. 
\item \emph{Projected cycles and 1D non-acceleration.} 
For every parameter pair \((\gamma, \beta)\) that admits a root-of-unity cycle, we construct a one-dimensional cycle by rotating the root-of-unity cycle by a carefully chosen phase and projecting it onto the first coordinate. Then, we prove existence of a function \(f \in \Fml(\R)\) on which \eqref{eq:HB} follows exactly this one-dimensional cycle. This proves that the existence of a root-of-unity cycle implies that of a one-dimensional cycle. Combined with the results of \citet{goujaud2025provable}, this shows that the method does not accelerate on \(\Fml(\R)\). 
\item \emph{General cycling.} By letting the dimension be free, we provide an example for which we have cycling that is covered by neither the \(\Orou\) region nor the \(\Oaug\) region. Instead, with the dimension being free, cycling is characterized by a mixed condition. We leave open the conjecture that \(\Ocyc{2} = \Oaug\), i.e., that with the dimension fixed, root-of-unity cycling is indeed characteristic of two-dimensional cycling. 
\end{enumerate}

Another closely related problem is the Kalman conjecture in absolute stability \citep{kalman1957physical}. There, one is interested in analyzing stability of a closed loop Lur'e system with slope restricted nonlinearities. The original conjecture is false, and \citet{seiler2021construction} give a sufficient condition for cycling of a SISO Lur'e system. Despite the similarity of the two problems, we are not aware of any existing works that apply the result of \citet{seiler2021construction} to one-dimensional heavy-ball cycling. To that end, 
we demonstrate how our construction relates with the one of \citet{seiler2021construction}. 

\paragraph{\textbf{Formal verification.}}
We accompany this paper with a formalization in Lean 4 and Mathlib of its main mathematical arguments (\href{https://github.com/kostfoto/Non-Acceleration-Heavy-Ball-1D.git}{https://github.com/kostfoto/Non-Acceleration-Heavy-Ball-1D.git}). The formalization includes the smooth strongly convex interpolation theorems, the Fourier reduction to the necessary mixed condition, and the characterization of primitive root-of-unity feasibility in Lemma~3.1, including the binding constraint and its angular characterization. We also verify Theorem~2.3, Example~3.2, and the one-dimensional cycle construction of Theorem~4.1, following the explicit sorting and chord-angle argument of the paper. Finally, we formally certify the counterexample of Section~5: all nine mixed inequalities are verified in exact arithmetic, and every primitive root-of-unity mode is excluded using the uniform large-$K$ argument of Appendix~B together with exact algebraic verification of the thirteen remaining finite cases. The deduction of quantitative non-acceleration relies on the cited result of \citet{goujaud2025provable}, while the sufficiency result of Proposition~5.1 is not formalized. 

%% file: sections/setup.tex
\section{Preliminaries and Problem Formulation}\label{sec:setup}

For $L>\mu>0$, let $\Fml(\R^d)$ be the set of differentiable functions $f:\R^d\to\R$ that are \(\mu\)-strongly convex and \(L\)-smooth:
\begin{align}
\tag{\(\mu\)-strong convexity}
f(y) &\geq f(x) + \langle \nabla f(x), y-x\rangle + \frac{\mu}{2} \|y - x\|^2 \\
\tag{\(L\)-smoothness}
f(y) &\leq f(x) + \langle \nabla f(x), y-x\rangle + \frac{L}{2} \|y - x\|^2.
\end{align}
Let \(\Fml = \bigcup_{d\geq 1} \Fml(\R^d)\) and let $\Qml\subset\Fml$ be the subset of such functions that are also quadratic. Given $(x_0,x_1)\in\R^d\times\R^d$ and parameters $(\gamma,\beta)\in\R^2$, the heavy-ball method \(\eqref{eq:HB}_{\gamma, \beta}\) on $f$ iterates the recursion~\eqref{eq:HB} from $(x_0,x_1)$. For a function class $\mathcal{F}$, let $\Ocv(\mathcal{F})$ be the parameters with which \eqref{eq:HB} converges for all $f\in\mathcal{F}$. For quadratics, we have \citep{polyak1964some,goujaud2025provable}
\begin{equation}\label{eq:cv-region}
\Ocv(\Qml)=\Big\{(\gamma,\beta):\ \lvert\beta\rvert<1,\ 0<\gamma<\tfrac{2(1+\beta)}{L}\Big\}.
\end{equation}
Throughout, when the function class \(\mathcal{F}\) is omitted, it is implied that we work on \(\Fml(\R^d)\), with \(\mu\), \(L\), and \(d\) clear from context. For \(N\in\Z_{\geq1}\) write \(\llb N\rrb:=\{0,\ldots,N-1\}\).

\begin{definition}[Cycle]\label[definition]{def:cycle}
Let $(\gamma,\beta)\in\Ocv(\Qml)$ and let $\{z_k\}_{k\in\llb K\rrb}\subseteq\R^d$ be points not all equal. For a differentiable function $f$, \(\eqref{eq:HB}_{\gamma, \beta}\) run on \(f\) \emph{cycles over} $\{z_k\}$ if the iterates of \eqref{eq:HB} starting from $(x_0,x_1)$ satisfy $x_t=z_{t\bmod K}$ for all \(t \in \Z_{\geq 0}\), and \(K\) is the minimum period for which this is true. We say \(\eqref{eq:HB}_{\gamma, \beta}\) cycles over $\{z_k\}$ \emph{on the class} $\mathcal{F}$ if such an $f\in\mathcal{F}$ exists, and denote by \(\OKcyc{d}{K}\) the set of all \((\gamma, \beta)\) leading to cycles of length \(K\) in dimension \(d\), by \(\Ocyc{d}=\bigcup_{K\geq 3} \OKcyc{d}{K}\) the set of all \((\gamma, \beta)\) leading to cycles of any length in dimension \(d\), and by \(\OCycle=\bigcup_{d\geq 1} \Ocyc{d}\) the set of all \((\gamma, \beta)\) leading to cycles of general length and dimension. 
\end{definition}

We let \(\theta_K := 2\pi/K\) and the inverse condition number \(\kappa := \mu/L \in (0, 1]\). Let $\OKrou{K}$ be the parameters with which \eqref{eq:HB} cycles over the \emph{ordered} \(K\)-th roots of unity $\{x_K^{k} = (\cos(\theta_K k), \sin(\theta_K k))^\top\}_{k\in\llb K\rrb}$ on $\Fml(\R^2)$; that is, the \(K\)-th roots of unity traversed in counterclockwise order. Let \(\Orou = \bigcup_K \OKrou{K}\). 

Throughout, the smooth strongly convex interpolation conditions of \citet{taylor2017smooth} will be useful:

\begin{theorem}[{Smooth strongly convex interpolation (reformulated), \citealp[Thm. 4]{taylor2017smooth}}]\label{thm:smooth-strongly-convex}
    For an index set \(I \subseteq \mathbb{N}\) and a set of pairs \(\{(x_i, g_i)\}_{i \in I} \subseteq \R^d \times \R^d\), there exists a function \(f \in \Fml(\R^d)\) for which \(\nabla f(x_i) = g_i\) for all \(i \in I\) if and only if there exist function values \(\{f_i\}_{i \in I} \subseteq \R\) such that for all \(i, j \in I\), 
    \[
    f_i - f_j - \langle g_j, x_i - x_j \rangle \geq \frac{1}{2(1-\mu/L)}\left(\frac{1}{L}\|g_i-g_j\|_2^2 + \mu \|x_i - x_j\|_2^2 - 2 \frac{\mu}{L} \langle g_i-g_j, x_i-x_j \rangle \right). 
    \]
    For the one-dimensional case, it is enough to consider the points in consecutive sorted order and require that \((g_i-g_j)/(y_i-y_j) \in [\mu, L]\). 
\end{theorem}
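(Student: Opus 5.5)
The first part of the statement (arbitrary dimension $d$) is exactly \citet[Thm.~4]{taylor2017smooth}, and I would simply invoke it. What needs an argument is the one-dimensional simplification. Concretely, I would show that for pairs $\{(y_i,g_i)\}_{i\in I}\subseteq\R\times\R$ the following are equivalent: (a) some $f\in\Fml(\R)$ has $f'(y_i)=g_i$ for all $i$; (b) after sorting the distinct abscissae $y_{(1)}<y_{(2)}<\cdots$, every consecutive ratio $(g_{(k+1)}-g_{(k)})/(y_{(k+1)}-y_{(k)})$ lies in $[\mu,L]$. Here repeated abscissae must carry equal gradients, which is the natural reading of the ratio condition. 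The route is to go through the derivative $f'$ directly, rather than through the quadratic inequalities of the first part.

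\emph{Necessity.} For $f\in\Fml(\R)$, the maps $x\mapsto f'(x)-\mu x$ and $x\mapsto Lx-f'(x)$ are nondecreasing.
\begin{itemize}
\item For the first map, add the strong convexity inequality at $(x,y)$ to the one at $(y,x)$. This gives $(f'(y)-f'(x))(y-x)\ge\mu(y-x)^2$.
\item For the second map, the standard co-coercivity consequence of $L$-smoothness plus convexity gives $(f'(y)-f'(x))(y-x)\le L(y-x)^2$. Equivalently, $L x^2/2-f$ is convex.
\end{itemize}
Hence every difference quotient of $f'$ lies in $[\mu,L]$, in particular every consecutive one. If $y_i=y_j$, then $g_i=f'(y_i)=g_j$ trivially.

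\emph{Sufficiency.} Let $\phi:\R\to\R$ be the piecewise linear interpolant of the sorted data $(y_{(k)},g_{(k)})$, extended affinely with slope $\mu$ to the left of the smallest point and to the right of the largest point. If $I$ is infinite, the sorted points may accumulate; I would first treat finite $I$, which is the only case used for cycles. Next set $f(x):=\int_0^x\phi(s)\,ds$. Then $f'=\phi$ is Lipschitz, and on each piece its slope lies in $[\mu,L]$. Hence $\phi(x)-\mu x$ and $Lx-\phi(x)$ are nondecreasing on all of $\R$. Integrating,
\[
f(y)-f(x)-\phi(x)(y-x)=\int_x^y(\phi(s)-\phi(x))\,ds
\]
lies between $\tfrac{\mu}{2}(y-x)^2$ and $\tfrac{L}{2}(y-x)^2$. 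So $f\in\Fml(\R)$ and $f'(y_i)=g_i$.

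The fact that only consecutive pairs matter also follows directly: for $y_{(a)}<y_{(b)}$, the slope $(g_{(b)}-g_{(a)})/(y_{(b)}-y_{(a)})$ is a convex combination of the consecutive slopes between them, weighted by the interval lengths, so it lies in $[\mu,L]$ automatically. As a consistency check, the function values $f_i:=f(y_i)$ of the constructed $f$ satisfy the $d$-dimensional inequalities of the first part, so the two formulations agree.

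I do not expect a substantive obstacle. The only delicate points are the degenerate ones: coincident abscissae (forcing $g_i=g_j$), and the case $\mu=L$, where the denominator $1-\mu/L$ in the general formula vanishes but the slope formulation still makes sense. The other point needing care is choosing the boundary extension so that $f$ is globally $\mu$-strongly convex; slope $\mu$ on both tails ensures this.
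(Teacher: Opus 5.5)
Your proof is correct, but it takes a different route from the paper. The paper does not prove this statement itself. The $d$-dimensional part is quoted from \citet[Thm.~4]{taylor2017smooth}. The paper only sketches that argument (normalize $\Fml\to\mathcal{F}_{0,1}$, conjugate to $\mathcal{F}_{1,\infty}$, shift to $\mathcal{F}_{0,\infty}$, apply convex interpolation) and re-runs it in Appendix~\ref{app:dft-interpolation}. The one-dimensional consecutive-slope clause is simply asserted there.

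You instead invoke the cited theorem for the general part and prove the 1D clause directly through $f'$. Necessity follows because $f'-\mu x$ and $Lx-f'$ are nondecreasing; for the second you do not need co-coercivity, since adding the two descent inequalities at $(x,y)$ and $(y,x)$ gives it at once. Sufficiency follows by integrating the piecewise-linear interpolant of the sorted data with slope-$\mu$ tails.

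What your route buys:
\begin{itemize}
\item an explicit interpolant $f$;
\item explicit function values $f_i=f(y_i)$ certifying the pairwise inequalities of the first clause, so it links the two clauses of the statement;
\item a transparent reason why only consecutive pairs matter (your convex-combination remark).
\end{itemize}
What it cannot do is generalize. It relies on every continuous function on $\R$ being a derivative, and on monotonicity of $f'-\mu x$ and $Lx-f'$ characterizing the class. In $\R^d$ a piecewise-linear field through the data need not be a gradient field. That is why the conjugation argument is needed there, together with the function-value form the paper uses in its two-dimensional Fourier analysis.

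The two routes do meet. Running the paper's normalization chain in one dimension, convex interpolation of the transformed data reduces to pairwise monotonicity. Equivalently, every pairwise slope of $\hat g_i=(g_i-\mu y_i)/(L-\mu)$ lies in $[0,1]$. This is exactly the pair of conditions $\operatorname{Re}(M_m w)\ge 0$ and $\operatorname{Re}((1-M_m)w)\ge 0$ in the proof of Theorem~\ref{thm:projection-main}. Your convex-combination step is what cuts all pairs down to consecutive ones.

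Restricting to finite $I$ is appropriate, since the consecutive-order clause is meaningless when the sorted abscissae accumulate. Your construction also works verbatim for any locally finite set of abscissae.
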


\noindent Note that for \(\mu = 0\) and \(L = \infty\), the conditions reduce to the convex interpolation conditions, which simply require that \(f_i \geq f_j + \langle g_j, x_i - x_j \rangle, \forall i, j \in I\). The proof of the general conditions involves normalization and conjugation steps, followed by an application of the convex interpolation conditions. We will often reapply these steps from scratch to derive simplified expressions. 

Using these conditions, \citet{goujaud2025provable} provided the following characterization of the \(\OKrou{K}\) cycling regions for \(K\geq3\) (the case of \(K=2\) can trivially be shown to require \(\gamma \geq 2(1+\beta)/L\), which is outside of \(\Ocv(\Qml)\)):

\begin{theorem}[{Ordered region, \citealp[Thm.~3.5]{goujaud2025provable}}]\label{thm:ordered}
For any $K\geq3$, the parameter region leading to the ordered \(K\)-th root-of-unity cycling is
\begin{equation*}
\OKrou{K}=\big\{(\gamma,\beta)\in\Ocv(\Qml):h(\mu\gamma,\cos\theta_K)\leq0\big\},
\end{equation*}
where $h(v,c)=v^2-2\big(\beta-c+\kappa(1-\beta c)\big)v+2\kappa(1-c)(1+\beta^2-2\beta c)$; equivalently, \(\langle M^\top(I-M) x_K^0, x_K^1 - x_K^0 \rangle \leq 0\) with 
\[
M = \frac{(1+\beta-\mu \gamma)I - R - \beta R^{-1}}{(L-\mu) \gamma}, \quad R = \begin{bmatrix}
    \cos \theta_k & -\sin \theta_k \\
    \sin \theta_k & \cos \theta_k
\end{bmatrix}.
\]
\end{theorem}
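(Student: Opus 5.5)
The plan is to reduce the existence question to a finite set of interpolation inequalities via \cref{thm:smooth-strongly-convex}, and then use the rotational symmetry of the ordered roots of unity to collapse them to a single scalar condition. If \eqref{eq:HB} cycles over $x_K^k$, the gradients are forced by the update. Solving \eqref{eq:HB} for the gradient gives
\[
g_k=\tfrac{1}{\gamma}\big((1+\beta)x_K^k-x_K^{k+1}-\beta x_K^{k-1}\big)=\tfrac{1}{\gamma}\big((1+\beta)I-R-\beta R^{-1}\big)x_K^k ,
\]
with indices taken mod $K$. So the $g_k$ are the images of the $x_K^k$ under one fixed matrix that commutes with $R$, and therefore $g_k=R^k g_0$. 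Cycling on $\Fml(\R^2)$ is then equivalent to the existence of values $f_k$ satisfying the interpolation inequalities for the pairs $(x_K^k,g_k)$.

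Next I would normalize the inequalities as in the proof of \cref{thm:smooth-strongly-convex}. First subtract $\tfrac{\mu}{2}\|x\|^2$, which turns the gradients into $g_k-\mu x_k$ and makes $L$ into $L-\mu$. Then pass to the conjugate, which reduces the problem to convex interpolation of the pairs $(y_k,s_k)$, where $s_k=x_k$ and $y_k=x_k-\tfrac{1}{L-\mu}(g_k-\mu x_k)$. Substituting the formula for $g_k$ gives $y_k=(I-M)x_k$, i.e. $y_k=x_k-Mx_k$ with $M$ as in the statement. Convex interpolation asks for values $\phi_k$ with $\phi_i\ge \phi_j+\langle s_j,y_i-y_j\rangle$ for all $i,j$.

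By rotational invariance I would take $\phi_k$ independent of $k$. This is without loss of generality, because averaging any feasible $(\phi_k)$ over the cyclic shifts gives a constant feasible family. The conditions then become $\langle x_j, y_i-y_j\rangle\le 0$ for all $i,j$. Since $M$ commutes with $R$, this reads $\langle x_0,(I-M)(x_m-x_0)\rangle\le 0$ for each shift $m$, equivalently $\langle M^\top(I-M)x_0,\,x_m-x_0\rangle\le0$ after adjusting for how the shift enters. The $m=1$ inequality is exactly the displayed condition of the theorem.

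The main obstacle is to show that $m=1$ is the binding case among $m=1,\dots,K-1$, i.e. that the neighbouring constraint implies all the others. I would write $v:=M^\top(I-M)x_0$. The quantity $\langle v,x_m-x_0\rangle$ is a linear functional evaluated at the chords from $x_0$ to the other roots. Because the points are on a circle, $\langle v, x_m-x_0\rangle\le0$ holds for all $m$ exactly when $v$ lies in the normal cone of the regular $K$-gon at the vertex $x_0$. That cone is bounded by the normals to the two adjacent edges, toward $x_1$ and $x_{K-1}$.

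I would then split $v$ into its radial and tangential components. Radially, the radial part of $v$ must be nonnegative; tangentially, its angle must lie within $\theta_K/2$ of $x_0$. Checking the $x_{K-1}$ edge separately is needed, and I expect that under $(\gamma,\beta)\in\Ocv(\Qml)$ the sign of the tangential component of $v$ makes the $x_{K-1}$ constraint automatic. Only the $x_1$ constraint would then remain.

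Finally, I would expand $\langle M^\top(I-M)x_0, x_1-x_0\rangle$ using $x_0^\top R x_0=\cos\theta_K$ and $x_0^\top R^{\pm 1}x_1$ expressed through $\cos\theta_K$ and $\cos 2\theta_K$. After multiplying by the positive factor $(L-\mu)^2\gamma^2/(\ldots)$, this should produce the quadratic $h(\mu\gamma,\cos\theta_K)$ up to a positive multiple of $(1-\cos\theta_K)$. That establishes the characterization of $\OKrou{K}$.
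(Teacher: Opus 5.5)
Your plan follows the paper's route: the paper obtains this statement as the case $m=1$ of Lemma~\ref{thm:primitive-rou-feasibility}. Your averaging over cyclic shifts is the paper's summation over $j$, and your normal-cone argument is a geometric version of its maximization of $\cot(\Delta\theta_K/2)$. Your final expansion is also right: $\langle M^\top(I-M)x_0,x_1-x_0\rangle=\frac{1-\cos\theta_K}{\kappa(L-\mu)^2\gamma^2}\,h(\mu\gamma,\cos\theta_K)$. \textbf{The gap is in the conjugation step}, and the proof breaks at the sentence ``equivalently \dots\ after adjusting for how the shift enters''. With your pairs $(y_k,s_k)=((I-M)x_k,x_k)$ and constant values, the constraints are $\langle x_0,(I-M)(x_m-x_0)\rangle=\operatorname{Re}\big((1-M_1)(\omega_K^m-1)\big)\le 0$, where $M_1=p-iq$ is the complex number representing $M$ (as in Section~\ref{sec:analysis}). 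Over all $m$ this says only that $p<1$ and $\arg(1-M_1)\le\pi/K$. With $\bar f:=(f-\tfrac\mu2\|\cdot\|^2)/(L-\mu)$, your pairs are exactly the interpolation data of the conjugate of $\tfrac12\|\cdot\|^2-\bar f$. Convexity of that function encodes only the upper curvature bound $L$, so $\mu$-strong convexity is lost. In particular your condition does not force $p>0$, whereas the true region needs $0<p<1$. It is therefore strictly weaker than the theorem, and no reindexing of shifts turns it into $\langle M^\top(I-M)x_0,x_m-x_0\rangle\le 0$.

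The fix is the chain of Appendix~\ref{app:dft-interpolation}:
\begin{itemize}
\item $\bar f\in\mathcal F_{0,1}$ has $\nabla\bar f(x_k)=Mx_k$.
\item Its conjugate lies in $\mathcal F_{1,\infty}$ and has gradient $x_k$ at the point $Mx_k$.
\item Subtracting $\tfrac12\|\cdot\|^2$ leaves a convex $\tilde f$ with $\nabla\tilde f(Mx_k)=(I-M)x_k$.
\end{itemize}
So $(I-M)x_k$ is a gradient and must be paired with the point $Mx_k$, not with $x_k$. Convex interpolation then reads $\tilde f_i\ge\tilde f_j+\langle (I-M)x_j,M(x_i-x_j)\rangle$. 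With constant values (your averaging is valid here), this is directly the theorem's form. This corrected $v=M^\top(I-M)x_0$ also closes your ``I expect'' step. Identifying $x_0$ with $1\in\Cx$, you get $v=(p(1-p)-q^2)+iq$, whose tangential part is $q>0$ because $|\beta|<1$ and $\gamma>0$. The $x_1$ constraint then forces $p(1-p)-q^2\ge q\cot(\pi/K)>0$, which makes the $x_{K-1}$ constraint hold strictly. So the $x_{K-1}$ constraint follows from the $x_1$ constraint, not from the sign of $q$ alone.
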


Before this work, we know the following inclusions: 
\[
\Orou \subseteq \Ocyc{d} \subseteq (\Ocv)^c \subseteq (\Otvl)^c, \quad d \geq 2,
\]
where \(\Otvl\) denotes the convergence region of \citet{taylor2018lyapunov}. The inverse inclusions are open. Together with the open inclusion \(\Orou \subseteq \Ocyc{1}\), they are the ones we study. In particular, from the work of \citet{goujaud2025open, goujaud2025provable}, the following open questions follow:
\begin{openproblem}\label[openproblem1]{op:oned}
Does $\Ocyc{1}=\Orou$? Does \eqref{eq:HB} accelerate on $\Fml(\R)$? Does $\OCycle=\Orou$? What is the conclusive relation between \(\Omega_{\mathrm{Cycle}}\), \(\Ocv\), and \(\Otvl\)?
\end{openproblem}

%% file: sections/analysis.tex
\section{Augmented regions and binding constraints}\label{sec:analysis}

We identify \(\R^2\cong\Cx\) via \((z_1, z_2) \mapsto z_1 + i z_2\), which simplifies the analysis of the rotations. For \(K\geq3\), write \(\omega_K:=e^{2\pi i/K}\) and let
\[
U_K := \{m \in \{1, \ldots, K-1\} : \gcd(m, K) = 1\}. 
\]
For \(m \in U_K\), we define the unnormalized and normalized \eqref{eq:HB} operators \(\hat M_m\) and \(M_m\) respectively as 
\begin{equation}\label{eq:mode-operator}
\hat M_m = \frac{(1+\beta) - \omega_K^m - \beta \omega_K^{-m}}{\gamma} \in \Cx. \quad M_m = \frac{(1+\beta - \mu\gamma) - \omega_K^m - \beta \omega_K^{-m}}{(L-\mu)\gamma} \in \Cx. 
\end{equation}
For \(m=1\), \(M_m\) corresponds to the operator used in \cref{thm:ordered}, written using complex numbers. Writing \(M_m=p_m-iq_m\), we have
\begin{equation}\label{eq:pq}
\begin{aligned}
p_m&=\frac{(1+\beta-\mu\gamma)-(1+\beta)\cos(m\theta_K)}{(L-\mu)\gamma},\\
q_m&=\frac{(1-\beta)\sin(m\theta_K)}{(L-\mu)\gamma},
\end{aligned}
\end{equation}
and we drop \(m\) when clear. 

Our first main result is that the ordered root-of-unity cycles do not capture all the possible cycling behavior over roots of unity. For \(m\in U_K\), let \(\OKm{K}{m}\) be the set of parameters in \(\Ocv(\Qml)\) with which \eqref{eq:HB} cycles over the \(m\)-jump \(K\)-th roots of unity cycle
\[
    1,\omega_K^m,\omega_K^{2m},\ldots,\omega_K^{(K-1)m}.
\]
Thus \(\OKm{K}{1}=\OKrou{K}\) is the ordered root-of-unity cycling region of \cite{goujaud2025provable}. Note that for \(K, m\) with \(\gcd(m, K) \neq 1\) the points of the cycle repeat, and we get cycling over a smaller cycle of length \(K'=K/\gcd(m, K)\) (formally, the generator \(\omega_K^m\) of the cyclic subgroup has order smaller than \(K\).) This justifies why we focus on \(K, m\) with \(m \in U_K\). We define the augmented root-of-unity cycling region by
\begin{equation*}
    \Oaug:=\bigcup_{K\geq3}\ \bigcup_{m\in U_K}\OKm{K}{m}. 
\end{equation*}
In the following, we provide a characterization of the augmented root-of-unity cycling region, and then show that it is strictly larger than the ordered one.

\paragraph{\textbf{Fourier reduction.}}
Let \(\{z_t\}_{t=0}^{K-1}\subseteq \Cx\) be a two-dimensional cycle of length \(K\) and write its discrete Fourier decomposition as
\begin{equation}\label{eq:dft}
    z_t=\sum_{m=0}^{K-1}c_m\omega_K^{mt}, \quad c_m = \frac{1}{K} \sum_{t=0}^{K-1} z_t \omega_K^{-mt}. 
\end{equation}
\eqref{eq:HB} cycles over the points \(\{z_t\}\) for a function \(f \in \Fml(\R^2)\) if and only if for all \(t\) we have that
\begin{equation}\label{eq:grad_equiv}
    \nabla f(z_t) = \frac{(1+\beta)z_t - z_{t+1} - \beta z_{t-1}}{\gamma} = \sum_{m} c_m \frac{(1+\beta) - \omega_K^m - \beta \omega_K^{-m}}{\gamma} \omega_K^{mt} = \sum_{m} c_m \hat M_m \omega_K^{mt}
\end{equation}
Following \citet{goujaud2025provable}, we apply the smooth, strongly convex interpolation conditions to the points of the cycle as in \cref{thm:smooth-strongly-convex}. The argument is provided in Appendix~\ref{app:dft-interpolation}. 
The interpolation condition between the indices \(j \to j+\Delta\) reduces to
\begin{align}\label{eq:dft-inner-product-main}
f_{j+\Delta} &\geq f_j + \operatorname{Re}\left[
    \left(\sum_{m'} c_{m'}(1- M_{m'})\omega_K^{m'j}\right)^*
    \left(\sum_m c_m  M_m\big(\omega_K^{m(j+\Delta)}-\omega_K^{mj}\big)\right)
\right] \notag\\
& = f_j+
\sum_m |c_m|^2\phi_{K,m}(\Delta)
\\
& \qquad + \sum_{m\neq m'}\operatorname{Re}\left[
    c_{m'}^*c_m(1- M_{m'})^* M_m
    (\omega_K^{m\Delta}-1)\omega_K^{(m-m')j}
\right] \notag,
\end{align}
where for the non-cross term of mode \(m\) we define the residual 
\begin{equation}\label{eq:interpolation-residual}
    \phi_{K,m}(\Delta)
    :=\operatorname{Re}\!\left[(1- M_m)^* M_m(\omega_K^{m\Delta}-1)\right], \quad \Delta \in \{0, \ldots, K-1\}.
\end{equation}
For a fixed constraint \(\Delta\), the residual captures how much slack each mode \(m\) has in the constraint. A cycle exists if and only if the above holds for every \(j, \Delta \in [\![K]\!]\). For \(m\neq m'\), we have \(\sum_j \omega_K^{(m-m')j}=0\), and summing over \(j\) eliminates all \(f_j\) and all cross terms. Hence, every cycle of length \(K\) in 2D satisfies the necessary mixed condition
\begin{equation}\label{eq:mixed-main}
    \sum_{m=0}^{K-1}s_m\phi_{K,m}(\Delta)\leq 0,
    \qquad \Delta=1,\ldots,K-1,
\end{equation}
for some weights \(s_m\propto |c_m|^2\geq0\), \(\sum_m s_m=1\). The constraint for \(\Delta=0\) can be ignored since it reduces to \(0\leq 0\) and is always true. This can also be seen from \eqref{eq:dft-inner-product-main}, where \(\Delta=0\) corresponds to \(j \to j\) interpolation conditions that are trivially always true. With this, we can state our first lemma: 

\begin{lemma}[Primitive R-o-U feasibility]\label{thm:primitive-rou-feasibility}
Fix \(K\geq 3\) and \(m\in U_K\). Then
\begin{equation}\label{eq:primitive-region-residual}
    \Omega_{K,m}^{\circ}
    =\left\{(\gamma,\beta)\in\Omega_{\text{cv}}(\mathcal Q_{\mu,L}):
    \phi_{K,m}(\Delta)\leq 0 \text{ for every } \Delta=1,\ldots,K-1\right\}.
\end{equation}
Moreover, assume without loss of generality that \(1\leq m\leq (K-1)/2\) and write \( M_m=p_m-iq_m\). Then, \(q_m>0\) and the following are equivalent:
\begin{enumerate}[label=(\roman*), nosep]
    \item \((\gamma,\beta)\in \Omega_{K,m}^{\circ}\);
    \item \(p_m(1-p_m)>q_m^2\), \(M_m\) and \(1-M_m\) lie in the fourth and first quadrants respectively, and
    \begin{equation}\label{eq:angle-bound-main}
        -\operatorname{arg} M_m+\operatorname{arg}(1-M_m)\leq \frac{\pi}{K},
    \end{equation}
    where \(\arg:\Cx\to(-\pi,\pi]\) is the principal argument.
\end{enumerate}
When these conditions hold, the binding constraint corresponds to the unique \(\Delta_m \equiv m^{-1} \bmod K \). If \(m > (K-1)/2\) then \(\Delta_m = \Delta_{K-m}\). For the interior of \(\Omega_{K,m}^{\circ}\) all inequalities become strict. 
\end{lemma}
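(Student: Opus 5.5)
For the first claim, set \(z_t=\omega_K^{mt}\), so that only the single Fourier coefficient \(c_m=1\) is nonzero. Then \eqref{eq:dft-inner-product-main} has no cross terms. The interpolation condition \(j\to j+\Delta\) becomes \(f_{j+\Delta}\ge f_j+\phi_{K,m}(\Delta)\), where \(f\) here denotes the normalized/conjugated function values of Appendix~\ref{app:dft-interpolation}.

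\emph{Necessity.} Summing over \(j\) (or invoking \eqref{eq:mixed-main} with \(s_m=1\)) gives \(\phi_{K,m}(\Delta)\le 0\) for every \(\Delta\).

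\emph{Sufficiency.} The residual does not depend on \(j\). So we may take all \(f_j\) equal, and every condition reads \(0\ge\phi_{K,m}(\Delta)\), which holds by assumption. Minimality of the period \(K\) follows from \(\gcd(m,K)=1\). This proves \eqref{eq:primitive-region-residual}.

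For the characterization, write \(a:=M_m\) and \(b:=1-M_m\). By \eqref{eq:pq}, \(q_m>0\): indeed \(\beta<1\), \(\gamma>0\), and \(\sin(m\theta_K)>0\) for \(1\le m\le (K-1)/2\). Expand
\[
\phi(\Delta)=\operatorname{Re}[\bar b a(\omega^{m\Delta}-1)] = |a||b|\bigl(\cos(\psi+\alpha_\Delta)-\cos\psi\bigr),
\]
where \(\psi=\arg(\bar b a)=\arg a-\arg b\) and \(\alpha_\Delta=2\pi m\Delta/K\). As \(\Delta\) ranges over \(1,\dots,K-1\), the angle \(\alpha_\Delta\) ranges over the nonzero multiples \(2\pi r/K\), \(r=1,\dots,K-1\), since \(m\) is invertible mod \(K\). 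The function \(r\mapsto\cos(\psi+2\pi r/K)-\cos\psi\) is zero at \(r=0\). Therefore \(\phi\le 0\) for all \(r\neq0\) holds iff \(\psi+2\pi r/K\), for \(r\ne0\), stays in the closed arc where cosine is at most \(\cos\psi\). That arc is \([|\psi|,2\pi-|\psi|]\) modulo \(2\pi\).

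The critical points are the neighbours \(r=\pm1\). The condition \(\cos(\psi\pm 2\pi/K)\le\cos\psi\) for both signs is equivalent to \(|\psi|\le\pi- \pi/K\) intersected with \(|\psi|\ge \pi-\pi/K\)... More carefully, we need the arc \(\{\psi+2\pi r/K\}\) with \(r\ne 0\) to avoid the open arc \((-|\psi|,|\psi|)\). Together with a sign check, this forces \(\psi\) to be within \(\pi/K\) of \(\pm\pi\), that is, \(\pi-|\psi|\le\pi/K\).

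Now translate this into the statement. In terms of \(a=p-iq\) and \(b=(1-p)+iq\) we have \(\pi+\psi = \pi-(\arg b-\arg a)\). Requiring the fourth/first quadrant placement (\(0<p<1\)) makes \(\arg b-\arg a\in(0,\pi)\), so the condition becomes \(-\arg a+\arg b\le\pi/K\), matching \eqref{eq:angle-bound-main}. The condition \(p(1-p)>q^2\) says \(\operatorname{Re}(\bar b a)>0\), that is, \(|\psi|<\pi/2\).

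\textbf{Main obstacle.} I expect the hardest step to be reconciling these branches, i.e.\ getting the sign of \(\psi\) right. I would redo the computation using \(\phi(\Delta)=\operatorname{Re}[\bar b a\,\omega^{m\Delta}]-\operatorname{Re}[\bar b a]\). Here \(\operatorname{Re}[\bar b a]=p(1-p)-q^2\), and I expect the correct form to be \(\phi\le0\iff \operatorname{Re}(\bar b a\,\omega^{r})\le\operatorname{Re}(\bar b a)\) for all \(r\neq 0\). This says \(\bar b a\) has the largest real part among its rotations by \(K\)-th roots of unity, i.e.\ \(|\arg(\bar b a)|\le \pi/K\). Since \(\arg(\bar b a)=\arg a-\arg b\) is negative when \(q>0\), this gives \eqref{eq:angle-bound-main}. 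Positivity of the real part then gives \(p(1-p)>q^2\) as a consequence (for \(K\ge3\), \(\pi/K<\pi/2\)). The quadrant claims follow because \(q>0\) and \(|\arg a-\arg b|<\pi/2\) force \(0<p<1\). Any residual cases with \(p\notin(0,1)\) I would exclude directly, using \(\gamma<2(1+\beta)/L\) from \eqref{eq:cv-region}.

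\textbf{Binding constraint.} The binding \(\Delta\) is the \(r=1\) neighbour on the side toward which \(\arg(\bar b a)<0\) points. The worst rotation is \(\omega^{r}\) with \(r=1\), i.e.\ \(m\Delta\equiv1\), so \(\Delta_m\equiv m^{-1}\bmod K\). For \(m>(K-1)/2\), replace \(m\) by \(K-m\) via complex conjugation, which reflects the cycle. Strictness in the interior follows because all the inequalities above are continuous and nondegenerate in \((\gamma,\beta)\).
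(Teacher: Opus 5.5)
\textbf{Verdict: correct, by a somewhat different route than the paper.} Your final argument, the one in the ``Main obstacle'' paragraph, is correct, but two things in the write-up need fixing first.

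\emph{Delete the first pass.} The claim that the condition forces $\psi$ to lie within $\pi/K$ of $\pm\pi$ is false. There $\cos\psi\le-\cos(\pi/K)<0$, while some rotation $\psi+2\pi r/K$ lies within $\pi/K$ of $0$ and has cosine at least $\cos(\pi/K)>0$. The subsequent ``translation'' to $-\arg a+\arg b\le\pi/K$ also does not follow from that claim. The correct reading of ``the points $\psi+2\pi r/K$, $r\neq0$, avoid the open arc $(-|\psi|,|\psi|)$'' is $|\psi|\le\pi/K$, which is what your redo obtains.

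\emph{Reorder the deductions in the redo.} You use $\arg(\bar b a)=\arg a-\arg b$ for principal arguments before knowing the quadrants. With $q_m>0$ one only has $\arg a-\arg b\in(-2\pi,0)$ a priori, so the identity can be off by $2\pi$. Instead:
\begin{enumerate}[label=(\alph*)]
\item Deduce $p_m(1-p_m)-q_m^2=\operatorname{Re}(\bar b a)>0$ directly from $|\arg(\bar b a)|\le\pi/K<\pi/2$.
\item Then $p_m(1-p_m)>q_m^2>0$ gives $0<p_m<1$, so no appeal to $\gamma<2(1+\beta)/L$ is needed.
\item This gives the quadrants, hence the identity, hence the angle bound.
\end{enumerate}
For the converse, the quadrants in (ii) put $\arg a-\arg b\in(-\pi,0)$. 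The angle bound then gives $\arg(\bar b a)\in[-\pi/K,0)$, and your rotation criterion returns every $\phi_{K,m}(\Delta)\le0$.

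\textbf{Comparison with the paper.} After this cleanup, the first part (single mode, no cross terms, equal function values) and the conjugation step for $m>(K-1)/2$ match the paper; the middle step differs.

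The paper factors
\[
\phi_{K,m}(\Delta)=(1-\cos\alpha_\Delta)\bigl(q_m\cot(\alpha_\Delta/2)-A\bigr),
\qquad A=p_m(1-p_m)-q_m^2,\quad \alpha_\Delta=m\Delta\theta_K .
\]
Monotonicity of $\cot$ then collapses the whole system, for every parameter pair, to the single scalar inequality $A\ge q_m\cot(\pi/K)$ at $\Delta_m$. Positivity of $A$ and the quadrant claims are read off from this. Only then is the surviving inequality $\operatorname{Re}[\bar b a(\omega_K-1)]\le0$ rotated into the angle form.

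You instead use the polar form and the criterion that $\bar b a$ must have the largest real part among its $K$ rotations by $K$-th roots of unity. This gives $|\arg(\bar b a)|\le\pi/K$ for all $\Delta$ at once. The binding constraint then appears afterwards as the nearest rotation $r=1$.

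Your version is more geometric and explains the $\pi/K$ as half the spacing of the roots. The paper's version yields the scalar form $A\ge q_m\cot(\pi/K)$, which Appendix~\ref{app:root-exclusion} reuses for the uniform large-$K$ exclusion. The two are equivalent, since for $q_m>0$ one has $\arg(A-iq_m)\in[-\pi/K,0)$ if and only if $A\ge q_m\cot(\pi/K)$.
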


\begin{proof}[Proof of Lemma~\ref{thm:primitive-rou-feasibility}]

For the first part of the statement, notice that the \(m\)-th mode root-of-unity cycle corresponds to Fourier coefficients \(c_m\in \Cx\) and \(c_{m'} = 0\) for all \(m' \neq m\). In this case, \(c_{m'}^*c_m=0\), there are no cross terms, and the same reduction becomes sufficient by choosing all function values to be \(0\). In addition, the distribution \(\{s_m\}_{m=0}^{K-1}\) is a discrete Dirac distribution over \(m\). Hence, cycling reduces equivalently to the system \(\{\phi_{K, m}(\Delta) \leq 0\}_{\Delta=1}^{K-1}\).

We first assume that \(m \leq (K-1)/2\). Since \(|\beta|<1\), from \eqref{eq:pq} it follows immediately that \(q_m > 0\) because \(m \theta_K \in (0, \pi)\). Since \(\gcd(m, K)=1\), for \(\Delta \neq 0\) we have \(m\Delta \neq 0 \bmod K\) and \(1-\cos(m\Delta\theta_K) > 0\). Expanding the residual then gives
\begin{align}\label{eq:phi-trig-main}
    \phi_{K,m}(\Delta)&=\big(p_m(1-p_m)-q_m^2\big)(\cos(m\Delta\theta_K)-1)+q_m\sin(m\Delta\theta_K) \notag \\
    &= \underbrace{(1-\cos(m\Delta\theta_K))}_{>0} \left(-\big(p_m(1-p_m)-q_m^2\big) + q_m \frac{\sin(m\Delta\theta_K)}{1-\cos(m\Delta\theta_K)} \right). 
\end{align}
The system \(\{\phi_{K, m}(\Delta) \leq 0\}_{\Delta=1}^{K-1}\) is feasible iff the second factor is non-positive for every \(\Delta = 1, \ldots, K-1\), and 
\[
\frac{\sin(m\Delta\theta_K)}{1-\cos(m\Delta\theta_K)} = \frac{2\sin(m\Delta\theta_K/2) \cos(m\Delta\theta_K/2)}{2\sin^2(m\Delta\theta_K/2)} = \frac{\cos(m\Delta\theta_K/2)}{\sin(m\Delta \theta_K/2)} = \cot(m\Delta \theta_K/2)
\]
is maximized for \(\Delta_m \equiv m^{-1} \pmod K\), for which the angle \(m\Delta\theta_K \bmod 2\pi\) is minimized in \((0, \pi]\). In other words, the whole system reduces to a single inequality \(\phi_{K,m}(\Delta_m)\leq 0\). For this \(\Delta = \Delta_m\), the term \(q_m \frac{\sin(m\Delta\theta_K)}{1-\cos(m\Delta\theta_K)}\) is positive, and thus feasibility implies that \(p_m(1-p_m)-q_m^2 > 0\), which in turn implies that \(p_m \in (0, 1)\). Together with \(q_m>0\), this gives that \(\arg M_m \in (-\pi/2, 0)\) and \(\arg (1-M_m) \in (0, \pi/2)\). Returning back to the original definition of the residual, we have that \((\gamma, \beta) \in \OKm{K}{m}\) if and only if
\begin{align*}
& \phi_{K, m}(\Delta_m) = \operatorname{Re}\!\left[(1- M_m)^* M_m(\omega_K-1)\right] \leq 0 \\
\Leftrightarrow \ & \operatorname{Im}\left[(1- M_m)^* M_m(\omega_K-1)e^{\pi i / 2}\right] \leq 0 \\
\Leftrightarrow \ & \operatorname{Im}\left[(1- M_m)^* M_m e^{i(\pi + \pi/K)}\right] \leq 0 \\
\Leftrightarrow \ & \operatorname{Im}\left[((1- M_m) M_m^*) e^{i(\pi - \pi/K)}\right] \geq 0 
\end{align*}
Because \(\arg M_m^* = -\arg M_m \in (0, \pi/2), \ \arg(1-M_m) \in (0, \pi/2)\), we have \(\arg((1-M_m)M_m^*) \in (0, \pi)\), and the last inequality is possible if and only if 
\begin{gather*}
\arg((1-M_m) M_m^*) + \pi - \frac{\pi}{K} \leq \pi \\ 
\Leftrightarrow -\operatorname{arg} M_m+\operatorname{arg}(1-M_m)=\arg((1-M_m) M_m^*) \leq \frac{\pi}{K}, 
\end{gather*}
Finally, by keeping track of the inequalities in the proof, it follows that strict feasibility is equivalent to a strict angle bound. 

Now, if \(m > (K-1)/2\), then we can set \(m \leftarrow K-m\) and everything remains the same. Indeed, setting \(m \leftarrow K-m\) reflects the cycle about the real line. The function class \(\Fml(\R^2)\) is closed under reflections, and for a reflected function \(f' \in \Fml(\R^2)\), \eqref{eq:HB} run on \(f'\) yields the reflected iterates. Formally, with \(m > (K-1)/2\) we have \(q_m<0\), and thus the binding \(\Delta_m = \Delta_{K-m}\), for which \(m\Delta_{K-m} \equiv -1 \bmod K\), \(\sin(m\Delta_{K-m}\theta_K) = -\sin(\theta_K) < 0\) and \(q_m \sin(m\Delta_{K-m}\theta_k) / (1-\cos(m\Delta_{K-m}\theta_K)) > 0\) is maximized. 
\end{proof}

\paragraph{\textbf{Binding constraints and fractional sets.}}
For \(m=1\), the binding residual corresponds to \(\Delta=1\), and the lemma recovers the characterization of \cref{thm:ordered}. For \(m \neq 1\), checking \(\phi_{K,m}(1)\) is not enough. We show in the following example that \(\Oaug\) is strictly larger than \(\Orou\) in general, by providing an explicit instance where for a pair \((\gamma, \beta)\) we have \((\gamma, \beta) \notin \Orou\) but \((\gamma, \beta) \in \interior(\OKm{5}{2})\). The augmented regions can be seen visually in \cref{fig:augmented}.

\begin{figure}[t]
    \centering
    \begin{subfigure}{0.45\linewidth}
    \includegraphics[width=0.9\linewidth]{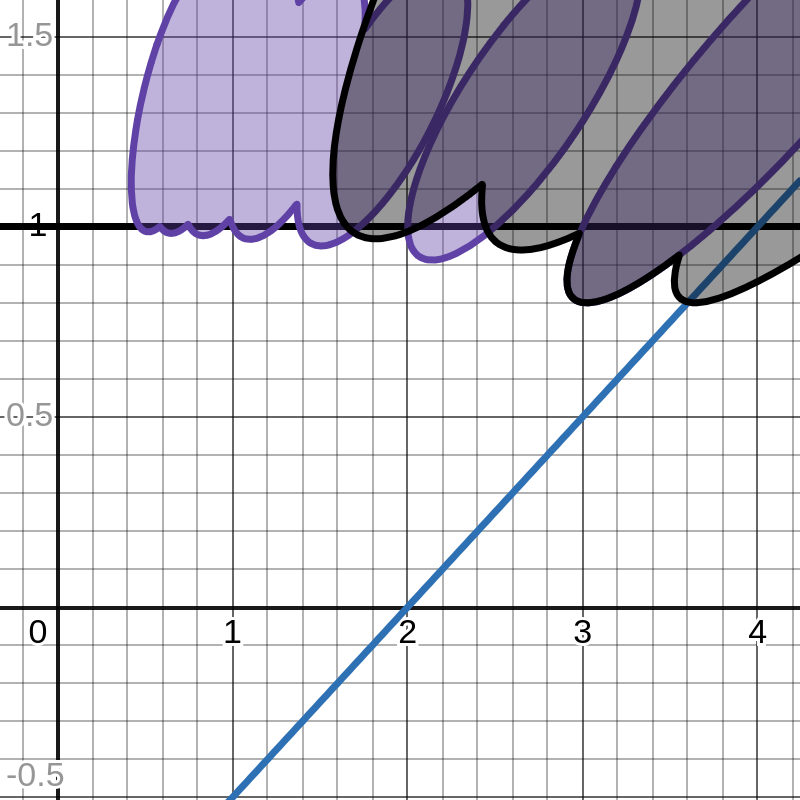}
    \caption{\(\kappa = 0.8\). Root-of-unity regions with \(m=1\) (purple) and \(m=2\) (black), plotted using the characterization in Lemma~\ref{thm:primitive-rou-feasibility}. The black region is not covered by the ordered \(m=1\) root-of-unity region and belongs to the augmented region.}
    \label{fig:augmented}
    \end{subfigure}
    \hfill
    \begin{subfigure}{0.45\linewidth}
    \centering
    \includegraphics[width=0.9\linewidth]{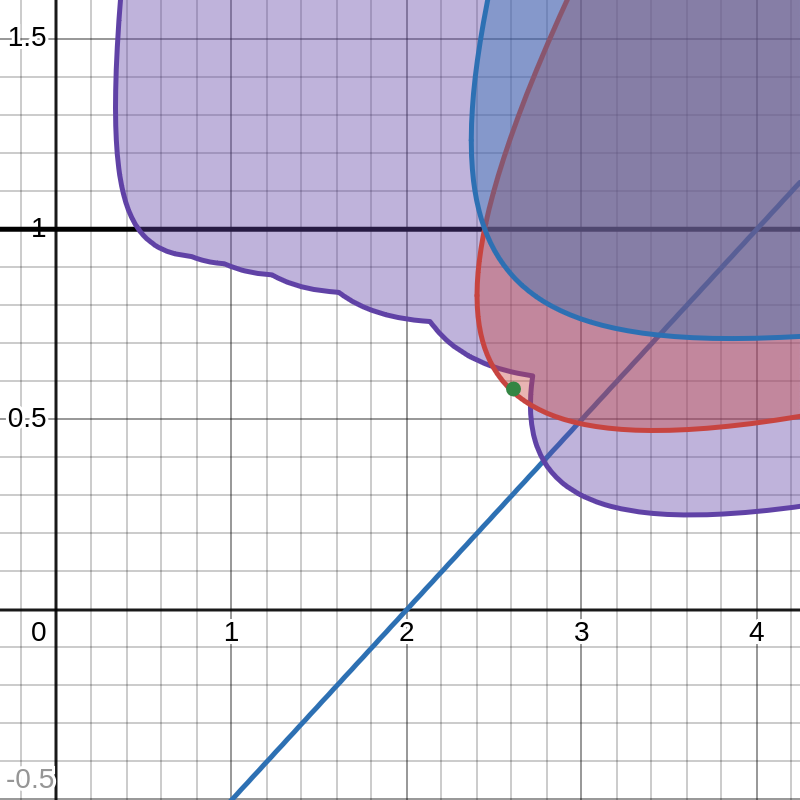}
    \caption{\(\kappa = 0.3\). The red region corresponds to the fractional condition obtained from the \(\Delta=1\) residual of \((K,m)=(7,2)\). The blue region is the stricter genuine cycling region obtained by checking the maximal residual. Hence fractional regions are not automatically cycling certificates.}
    \label{fig:rational}
    \end{subfigure}
    \caption{Augmented regions and binding constraints}
    \label{fig:aug-bind}
\end{figure}

\begin{example}\label{ex:witness}
Let $\kappa = 4/5$, $\beta = 17/20$, and $L\gamma = 18/5$, so that $\mu\gamma = 72/25$ and $(\gamma,\beta)\in\Ocv(\Qml)$ by \eqref{eq:cv-region}. For $(K,m)=(5,2)$, condition (ii) of Lemma~\ref{thm:primitive-rou-feasibility} holds strictly:
\begin{equation*}
    p_2(1-p_2)-q_2^2 = \frac{17215\sqrt{5}-36285}{10368} > 0
    \quad\text{and}\quad
    \tan\tfrac{\pi}{5}\,\big(p_2(1-p_2)-q_2^2\big) - q_2 > 0.
\end{equation*}
Since $p_2$ and $q_2$ are continuous in $(\gamma,\beta)$ on the open set $\Ocv(\Qml)$, the subset of $\Ocv(\Qml)$ defined by the two strict inequalities is open and contained in $\OKm{5}{2}$, so $(\gamma,\beta)\in\interior(\OKm{5}{2})\subseteq\interior(\Oaug)$. Moreover, the minimum of the quadratic \(h\) of \cref{thm:ordered} with $v = 72/25$ over $c\in[-1/2,1]$ equals $63/500 > 0$; as $\cos\theta_{K'}\in[-1/2,1)$ for every $K'\geq 3$, \cref{thm:ordered} gives $(\gamma,\beta)\notin\OKm{K'}{1}$ for every $K'\geq 3$, that is, $(\gamma,\beta)\notin\Orou$. Hence
\begin{equation*}
    \interior(\Oaug)\setminus\Orou\neq\emptyset.
\end{equation*}
\end{example}

Notice that the expression of \(\phi_{K, m}(\cdot)\) depends only on \(m / K\) as \(\phi_{K, m}(\cdot) = \phi_{m/K}(\cdot)\). The fractional regions discussed in \cite{goujaud2025open} are obtained by plugging a fractional \(K' = K / m\) into the \(m=1\) characteristic equation; that is, they correspond to \(\phi_{K', 1}(1) = \phi_{m/K}(1) = \phi_{K, m}(1) \leq 0\). As we have seen already, for \(m\neq1\) the latter does not correspond to the binding constraint, and it remains unclear what types of behavior the fractional set corresponds to; see \cref{fig:rational}. 

%% file: sections/projected.tex
\section{Projected cycles and 1D non-acceleration}\label{sec:projected}

In this section, we show that \eqref{eq:HB} does not accelerate on \(\Fml(\R)\). The main idea is to construct a 1D cycle by rotating a root-of-unity cycle by some well-chosen angle \(\phi_\ast\) and projecting it into onto \(\mathbb{R}\). The angle must be chosen in a way that takes the phase induced by the \eqref{eq:HB} operator \(\hat M_m\) into consideration, and allows control of the cross-terms of \cref{eq:dft-inner-product-main}. We will apply the characterizations in Lemma~\ref{thm:primitive-rou-feasibility}. Note that DFT can also describe 1D cycles: cycles over the real line correspond to Fourier coefficients \(c_{K-m}^* = c_m\).

For 1D, according to \cref{thm:smooth-strongly-convex}, the interpolation conditions reduce to gradient slope conditions between consecutive points. After projection, these do not necessarily correspond to the same binding pairs \((j, j+\Delta_m)\) of root-of-unity cycling, meaning that the non-cross terms of Eq. \eqref{eq:dft-inner-product-main} have additional ``slack" that can help correct the cross terms. We reformulate the interpolation conditions based on their simplified one-dimensional form, and then it suffices to find a rotation that does not change the sort order of the projected points. The existence of such a rotation is ensured by the angle bound (ii) of Lemma~\ref{thm:primitive-rou-feasibility}.

\begin{theorem}[1D cycle construction]\label{thm:projection-main}
Let \(K\geq 3\), \(m\in U_K\), and \((\gamma,\beta)\in\Omega_{K,m}^{\circ}\). Then \((\gamma,\beta)\in\Omega_{K\text{-cycle}}^{(1)}\). More explicitly, after possibly replacing \(m\) by \(K-m\), set
\begin{equation}\label{eq:phase-main}
    \varphi_*:=-\frac12\operatorname{arg}\big((1-M_m)M_m\big)-\frac{\pi}{2K}.
\end{equation}
Define one-dimensional points and gradients by
\begin{equation}\label{eq:oned-cycle-data-main}
    y_t:=2\cos(mt\theta_K+\varphi_*),\quad
    g_t:=\frac{(1+\beta)y_t-y_{t+1}-\beta y_{t-1}}{\gamma},
    \quad t=0,\ldots,K-1,
\end{equation}
with indices interpreted modulo \(K\). Then, after sorting the points \(y_t\) increasingly, every adjacent slope lies in \([\mu,L]\). Therefore there exists \(f\in\mathcal F_{\mu,L}(\mathbb R)\) with \(f'(y_t)=g_t\) for all \(t\), and HB initialized at \((y_0,y_1)\) cycles on these \(K\) points.
\end{theorem}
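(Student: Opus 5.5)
The plan is to reduce the one-dimensional slope conditions to sign conditions on chords of the unit circle, and then use the angle bound \eqref{eq:angle-bound-main} of Lemma~\ref{thm:primitive-rou-feasibility} to pick a rotation under which every chord between sorted-adjacent points satisfies them. By the reflection argument in the proof of Lemma~\ref{thm:primitive-rou-feasibility}, we may assume \(1\le m\le (K-1)/2\). Then \(q_m>0\), \(\alpha:=\arg M_m\in(-\pi/2,0)\), \(\alpha':=\arg(1-M_m)\in(0,\pi/2)\) and \(\alpha'-\alpha\le \pi/K\).

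\textbf{Step 1: complexify the data.} Write \(\psi_t:=mt\theta_K+\varphi_*\), so that \(y_t=2\operatorname{Re}[e^{i\psi_t}]\). By the computation \eqref{eq:grad_equiv}, \(g_t=2\operatorname{Re}[\hat M_m e^{i\psi_t}]\), where \(\hat M_m=\mu+(L-\mu)M_m\) by \eqref{eq:mode-operator}. Set \(u_t:=2\operatorname{Re}[M_m e^{i\psi_t}]=2|M_m|\cos(\psi_t+\alpha)\). Then \(g_t=\mu y_t+(L-\mu)u_t\) and \(y_t-u_t=2|1-M_m|\cos(\psi_t+\alpha')\).

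A slope \((g_a-g_b)/(y_a-y_b)\in[\mu,L]\) for \(y_a\ne y_b\) is equivalent to \((u_a-u_b)/(y_a-y_b)\in[0,1]\). This amounts to two sign conditions: \((u_a-u_b)(y_a-y_b)\ge0\) and \(((y-u)_a-(y-u)_b)(y_a-y_b)\ge0\). The sum-to-product identity gives \(\cos(\psi+\delta)-\cos(\psi'+\delta)=-2\sin(c+\delta)\sin\frac{\psi-\psi'}{2}\), with chord midpoint angle \(c:=(\psi+\psi')/2\). So both conditions hold as soon as \(\sin c\,\sin(c+\alpha)\ge0\) and \(\sin c\,\sin(c+\alpha')\ge 0\). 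Equivalently, \(c \bmod \pi\) must avoid the open arc \((-\alpha',-\alpha)\) around \(0\), which has length \(\alpha'-\alpha\le\pi/K\).

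\textbf{Step 2: identify the adjacent chords.} Since \(\gcd(m,K)=1\), the set \(\{\psi_t\}\) equals \(\{\varphi_*+2\pi k/K\}\). Every midpoint angle is therefore of the form \(\varphi_*+\pi n/K\), \(n\in\Z\). I will show that two points adjacent in the sorted order of \(\cos\psi\) are, apart from the two extreme ends, one upper-half and one lower-half point with \(|\psi+\psi'|<2\pi/K\) mod \(2\pi\). This is the zigzag structure of projecting a regular polygon. Hence every relevant \(c \bmod \pi\) lies in the window \((-\pi/K,\pi/K)\). Inside that window the lattice \(\varphi_*+\pi\Z/K\) has at most two points, \(c_0\) and \(c_0+\pi/K\). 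The boundary cases, such as pairs that are both near \(\psi\approx0\) or both near \(\psi\approx\pi\), must be checked to fall among these same two values; they do, since such pairs have \(\psi+\psi'\) close to \(0\) or \(2\pi\).

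\textbf{Step 3: choose the phase.} With \(\varphi_*\) as in \eqref{eq:phase-main}, the two lattice values in the window are \(c_0=-\frac{\alpha+\alpha'}2-\frac{\pi}{2K}\) and \(c_0+\pi/K=-\frac{\alpha+\alpha'}2+\frac{\pi}{2K}\). The conditions \(c_0\le-\alpha'\) and \(c_0+\pi/K\ge-\alpha\) are each equivalent to \(\alpha'-\alpha\le \pi/K\), which is exactly \eqref{eq:angle-bound-main}. So the forbidden arc is avoided, and every adjacent slope lies in \([\mu,L]\).

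\textbf{Step 4: conclude.} The same choice also keeps \(\sin c\ne0\) for every adjacent pair. Hence the \(y_t\) are pairwise distinct, and the minimal period is exactly \(K\). The one-dimensional clause of Theorem~\ref{thm:smooth-strongly-convex} then yields \(f\in\Fml(\R)\) with \(f'(y_t)=g_t\). Since \(g_t\) is defined by the HB relation, HB started at \((y_0,y_1)\) reproduces \(y_2,y_3,\dots\) periodically.

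I expect Step 2 to be the main obstacle: the combinatorial claim that sorted-adjacent pairs have midpoint angles confined to \(\{c_0,c_0+\pi/K\}\bmod\pi\), including the two extreme endpoints and the parity cases of \(K\). I would handle it by pairing each point \(e^{i\psi}\) with the reflections of its neighbours. Concretely, I would parametrize the points by \(k\) and show directly that the \(x\)-order interleaves \(\{\varphi_*+2\pi k/K\}\) and \(\{-\varphi_*-2\pi k/K\}\).
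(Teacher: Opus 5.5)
Your proposal is correct and is essentially the paper's proof. Your chord midpoint angle $c$ is the paper's $\arg w_{j,\Delta}$ shifted by $\pm\pi/2$. Your forbidden arc $(-\alpha',-\alpha)$ (mod $\pi$) is, after that shift, the complement of the paper's admissible window $[-\frac{\pi}{2}-\arg M_m,\ \frac{\pi}{2}-\arg(1-M_m)]$. Your two lattice values $c_0=\varphi_*$ and $c_0+\pi/K$ are exactly the two chord directions in \eqref{eq:two-chord-args}, which gives the same interval \eqref{eq:valid-phase-interval} and the same midpoint choice.

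The only variation is in the combinatorial step. You get the two-direction fact from the interleaving of $\{\varphi+2\pi k/K\}$ with its reflection, which holds for every $\varphi\notin\frac{\pi}{K}\Z$, so your worry about the extreme pairs is unnecessary. The paper instead reads it off an explicit sorted list. You should also state that your Step~3 inequalities give $\varphi_*\in(-\pi/K,0)$. Your identification of $c_0=\varphi_*$ as the lattice point in $(-\pi/K,0)$ implicitly uses this, and it is also what gives you distinct $y_t$ and minimal period $K$.
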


\begin{proof}[Proof of \cref{thm:projection-main}]
We let any rotation phase \(\varphi\) and define
\[
    y_t=2\cos(mt\theta_K+\varphi) = 2 \operatorname{Re}( e^{i\varphi}\omega_K^{mt} ),
    \qquad
    g_t=\frac{(1+\beta)y_t-y_{t+1}-\beta y_{t-1}}{\gamma}.
\]
For \(\Delta\) such that \(y_{j+\Delta}>y_j\), set \(w_{j,\Delta}:=e^{i\varphi}\big(\omega_K^{m(j+\Delta)}-\omega_K^{mj}\big)\). Then 
\[
y_{j+\Delta}-y_j=2\operatorname{Re}w_{j,\Delta}>0
\]
and the \eqref{eq:HB} update gives
\[
    \gamma(g_{j+\Delta}-g_j)=2\operatorname{Re}\left(((1+\beta)-\omega_K^m-\beta\omega_K^{-m})w_{j,\Delta}\right).
\]
In 1D, from Theorem~\ref{thm:smooth-strongly-convex}, interpolation reduces to slope conditions for consecutive sorted pairs. The slope \((g_{j+\Delta}-g_j)/(y_{j+\Delta}-y_j)\) lies in \([\mu,L]\) if and only if
\begin{align*}
    &\frac{\operatorname{Re}\left(((1+\beta)-\omega_K^m-\beta\omega_K^{-m})w_{j,\Delta}\right)}{\gamma} - \mu \operatorname{Re}w_{j, \Delta} \geq 0, \text{ and} \\
    &\frac{\operatorname{Re}\left(((1+\beta)-\omega_K^m-\beta\omega_K^{-m})w_{j,\Delta}\right)}{\gamma} - L \operatorname{Re}w_{j, \Delta} \leq 0.
\end{align*}
The first can be rewritten as
\begin{align*}
    & \frac{\operatorname{Re}\left(((1+\beta)-\omega_K^m-\beta\omega_K^{-m})w_{j,\Delta}\right)}{\gamma} - \mu \operatorname{Re}w_{j, \Delta} \geq 0 \\ \Leftrightarrow \ & (L-\mu) \operatorname{Re} \left(\frac{(1+\beta-\mu\gamma)-\omega_K^m-\beta\omega_K^{-m})}{(L-\mu)\gamma}w_{j,\Delta}\right) \\
    & \qquad = \operatorname{Re} \left(\frac{(1+\beta)-\omega_K^m-\beta\omega_K^{-m})w_{j,\Delta}}{\gamma} - \mu w_{j, \Delta}\right) \geq 0 \\
    \Leftrightarrow \ & \operatorname{Re}(M_m w_{j, \Delta}) \geq 0, 
\intertext{while the second can be written as}
    & \frac{\operatorname{Re}\left(((1+\beta)-\omega_K^m-\beta\omega_K^{-m})w_{j,\Delta}\right)}{\gamma} - L \operatorname{Re}w_{j, \Delta} \leq 0 \\ 
    \Leftrightarrow \ & \frac{\operatorname{Re}\left(((1+\beta-\mu\gamma)-\omega_K^m-\beta\omega_K^{-m})w_{j,\Delta}\right)}{\gamma} - (L-\mu) \operatorname{Re}w_{j, \Delta} \leq 0,\\
    \Leftrightarrow \ & (L-\mu) \operatorname{Re} \left(\left(\frac{(1+\beta-\mu\gamma)-\omega_K^m-\beta\omega_K^{-m})}{(L-\mu)\gamma} - 1\right)w_{j,\Delta}\right) \leq 0 \\
    \Leftrightarrow \ & \operatorname{Re}((1-M_m) w_{j, \Delta}) \geq 0. 
\end{align*}
Hence, we have interpolation if and only if
\begin{align*}
    &\operatorname{Re}(M_mw_{j,\Delta})\geq0,
    \qquad
    \operatorname{Re}((1-M_m)w_{j,\Delta})\geq0 \\
    \Leftrightarrow \ &
    \operatorname{arg}w_{j,\Delta}\in
    \left[-\frac{\pi}{2}-\arg M_m, \frac{\pi}{2} - \arg M_m \right]\cap\left[-\frac{\pi}{2}-\arg (1- M_m), \frac{\pi}{2}-\arg (1- M_m)\right] \\
    \Leftrightarrow \ &
    \operatorname{arg}w_{j,\Delta}\in
    \left[-\frac{\pi}{2}-\arg M_m,\,\frac{\pi}{2}-\arg (1- M_m)\right],
\end{align*}
where in both equivalences we used that \(\arg M_m \in (-\pi/2, 0)\) and \(\arg(1-M_m) \in (0, \pi/2)\) given from Lemma~\ref{thm:primitive-rou-feasibility}, so that the arguments always remain principal. 

We now determine the sort-order. We sort the numbers \(\cos(\ell\theta_K+\varphi)\), \(\ell=0,\ldots,K-1\). For \(\varphi\in[-\pi/K,0]\), their order does not change, because two such cosines coincide only when \(\varphi\) is a multiple of \(\pi/K\). Taking \(\varphi<0\) close to zero gives the explicit order
\[
\begin{array}{ll}
K=2r+1: & r+1,\ r,\ r+2,\ r-1,\ldots,\ 2r,\ 1,\ 0,\\[1mm]
K=2r: & r,\ r+1,\ r-1,\ r+2,\ldots,\ 2r-1,\ 1,\ 0.
\end{array}
\]
In both cases, the sum of the indices of two consecutive sorted points is congruent to either \(0\) or \(1\) modulo \(K\), and thus
\begin{equation}\label{eq:two-chord-args}
    \operatorname{arg}w_{j, \Delta}\in
    \left\{\frac{\pi}{2}+\varphi,\; -\frac{\pi}{2}+\frac{\pi}{K}+\varphi\right\}.
\end{equation}
Therefore all adjacent slopes are in \([\mu,L]\) if and only if
\begin{equation}\label{eq:valid-phase-interval}
    -\arg M_m-\frac{\pi}{K}\leq \varphi\leq -\arg(1- M_m).
\end{equation}
The interval is nonempty by Lemma~\ref{thm:primitive-rou-feasibility} (ii). We choose the mid-point \(\varphi_*=-\frac12\operatorname{arg}((1-M_m)M_m)-\frac{\pi}{2K}\). Hence, rotation does not ruin sort-order, and every adjacent sorted slope belongs in \([\mu,L]\) and the 1D interpolation conditions hold.
\end{proof}

From \cref{thm:projection-main} it follows immediately that root-of-unity cycling implies one-dimensional cycling for the same parameter pair \((\gamma, \beta)\). Hence, we can use the proof of non-acceleration of \citet[Theorem 3.6]{goujaud2025provable} verbatim in the one-dimensional case as well. We provide the following corollary:

\begin{corollary}[1D cycling and non-acceleration]\label{cor:nonaccel-main}
We have
\[
    \Oaug=\bigcup_{K, m} \OKm{K}{m}\subseteq \Ocyc{1}
\]
and heavy-ball does not accelerate on \(\Fml(\R)\). More precisely, with the same absolute constant \(C>50/3\) as in \citep{goujaud2025provable}, for every \(0<\mu<L\) and every choice of parameters \((\gamma,\beta)\), either heavy-ball already has quadratic worst-case rate larger than \((1-C\kappa)/(1+C\kappa)\), or there exists a function \(f\in\Fml(\R)\) and an initialization for which heavy-ball cycles. Hence no parameter choice gives an accelerated worst-case rate on the one-dimensional class.
\end{corollary}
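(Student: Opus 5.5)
The corollary has two parts: the inclusion $\Oaug\subseteq\Ocyc{1}$, and the quantitative non-acceleration statement. I will obtain the first directly from \cref{thm:projection-main} and then transfer the argument of \citet[Theorem~3.6]{goujaud2025provable} to one dimension.

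\emph{The inclusion.} Take $(\gamma,\beta)\in\Oaug$. By definition there are $K\geq 3$ and $m\in U_K$ with $(\gamma,\beta)\in\OKm{K}{m}$. \Cref{thm:projection-main} then produces explicit data $y_t=2\cos(mt\theta_K+\varphi_*)$ and $g_t$. After sorting the $y_t$, all adjacent slopes lie in $[\mu,L]$, so by \cref{thm:smooth-strongly-convex} there is $f\in\Fml(\R)$ on which \eqref{eq:HB} started at $(y_0,y_1)$ reproduces the sequence $y_t$.

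To conclude $(\gamma,\beta)\in\OKcyc{1}{K}$, I still need the two requirements of \cref{def:cycle}. First, the points are not all equal: the $y_t$ are real parts of a rotated regular $K$-gon with $K\geq 3$, so they cannot coincide. Second, $K$ is the minimal period. The map $t\mapsto mt \bmod K$ is a bijection, and for the chosen phase the projection $\ell\mapsto\cos(\ell\theta_K+\varphi_*)$ is injective. This holds because the proof of \cref{thm:projection-main} places $\varphi_*$ in a range where no two cosines coincide. If $\varphi_*$ falls on an endpoint of the interval in \eqref{eq:valid-phase-interval}, I will perturb it slightly inside the interval. Hence the $y_t$ are pairwise distinct and the period is exactly $K$. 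Taking the union over $(K,m)$ gives $\Oaug\subseteq\Ocyc{1}$, and in particular $\Orou=\bigcup_K\OKm{K}{1}\subseteq\Ocyc{1}$.

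\emph{Non-acceleration.} The proof of \citet[Theorem~3.6]{goujaud2025provable} is a dichotomy over the parameter plane. If $(\gamma,\beta)$ is outside a region determined by $C\kappa$, the quadratic worst-case rate, which is the spectral radius of the heavy-ball companion matrix on $\Qml$, already exceeds $(1-C\kappa)/(1+C\kappa)$. Otherwise, $(\gamma,\beta)\in\OKrou{K}$ for some $K\geq 3$, using the quadratic $h$ of \cref{thm:ordered}. That argument uses only the two-dimensional root-of-unity cycle as its certificate of non-convergence. I will replace that certificate by the one-dimensional cycle from the inclusion above, which applies since $\OKrou{K}=\OKm{K}{1}$. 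The quadratic branch needs no change, because univariate quadratics $\tfrac{\lambda}{2}x^2$ with $\lambda\in[\mu,L]$ already achieve the worst-case spectral rate on $\Qml$. A cycling iterate sequence does not converge, so no parameter choice achieves an accelerated worst-case rate on $\Fml(\R)$.

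The main obstacle is verifying that the cited proof is truly dimension-agnostic in its quadratic branch. I need the worst case to be attained by one-dimensional quadratics and not to rely on multivariate quadratics. I expect this to hold because the quadratic rate of \eqref{eq:HB} decomposes over eigenvalues. A secondary point is the minimal-period and phase-genericity check above.
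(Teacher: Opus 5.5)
Your proposal is correct and follows the paper's route. You apply \cref{thm:projection-main} to get $\Oaug\subseteq\Ocyc{1}$, then reuse the dichotomy of \citet[Theorem~3.6]{goujaud2025provable} with the one-dimensional cycle as the non-convergence certificate. Your extra checks fill in what the paper leaves implicit: the minimal period follows from the $y_t$ being distinct, and the quadratic worst case is attained by univariate quadratics through the eigen-decomposition. The fallback of perturbing $\varphi_*$ is unnecessary, and it would be impossible when the angle bound is tight and the interval is a single point. It is unnecessary because $\arg M_m<0<\arg(1-M_m)$ places the whole closed interval \eqref{eq:valid-phase-interval} strictly inside $(-\pi/K,0)$, where no two of the cosines coincide.
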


%% file: sections/general.tex
\section{General Cycling}\label{sec:general}

Throughout this section, we set \(L=1\) (accumulating it into \(\gamma\) WLOG) and \(\mu=\kappa\in(0,1)\). In \cref{sec:analysis}, we saw that the dependence of all involved quantities is on the ratio \(2\pi m/K\), where \(K\) is the cycle length and \(m\) the mode. A natural way to remove the dependence from \(K\) is to consider the effective frequency \(f=m/K\). 

We repeat the Fourier-based analysis for frequencies \(f \in [0, 1)\) and \(d\)-dimensional cycles. We define the normalized heavy-ball operator
\begin{equation}
M(f)=\frac{(1+\beta-\kappa \gamma) - e^{2\pi i f} - \beta e^{-2\pi i f}}{(1-\kappa)\gamma} = p(f) - i q(f),  
\label{eq:norm-operator-cont}
\end{equation}
\begin{equation}
    p(f) = \frac{(1+\beta-\kappa \gamma) - (1+\beta)\cos (2\pi f) }{(1-\kappa)\gamma}, \quad q(f) = \frac{(1-\beta)\sin (2\pi f)}{(1-\kappa)\gamma}. 
    \label{eq:pq-cont}
\end{equation}
We view a \(d\)-dimensional \(K\)-cycle as a signal \(\{z_t\}_{t=0}^{K-1} \subseteq \Cx^{\lceil d/2 \rceil}\), where we pad with an additional zeroed dimension in case of odd \(d\). The higher-dimensional DFT decomposition of this signal allows us to write it as
\begin{equation}
    z_t = \sum_{m = 0}^{K-1} c_{m/K} \omega_K^{m t}, \quad \{c_{m/K}\}_{m=0}^{K-1} \subseteq \Cx^{\lceil d/2 \rceil}. 
    \label{eq:dft-cont}
\end{equation}
For \(K \geq 2\), we have that \(c_{m/K} \neq 0\) for at least one \(m \geq 1\), for otherwise we would have a constant cycle of length \(K=1\). Note that, as mentioned already, cycling with \(K=2\) is impossible for \(\gamma < \frac{2(1+\beta)}{L}\), and thus we restrict to \(K\geq 3\). 

With a similar analysis to that of the 2-dimensional case, heavy-ball cycles over these points if and only if for every \(j, j + \Delta\) we have
\begin{align}\label{eq:dft-inner-product-main-cont}
f_{j+\Delta} &\geq f_j + \operatorname{Re}\left[
    \left(\sum_{m'} c_{m'/K}(1- M(m'/K))\omega_K^{m'j}\right)^*
    \left(\sum_m c_{m/K}  M(m/K)\big(\omega_K^{m(j+\Delta)}-\omega_K^{mj}\big)\right)
\right] \notag\\
& = f_j+
\sum_m \|c_{m/K}\|^2\phi(m/K, \Delta)
\\
& \qquad + \sum_{m\neq m'}\operatorname{Re}\left[
    (c_{m'/K}^*c_{m/K})(1- M(m'/K))^* M(m/K)
    (\omega_K^{m\Delta}-1)\omega_K^{(m-m')j}
\right] \notag,
\end{align}
where for the non-cross term of mode \(m\) we defined the residual 
\begin{equation}\label{eq:interpolation-residual-cont}
    \phi(f, \Delta)
    :=\operatorname{Re}\!\left[(1 - M(f))^* M(f)(e^{2\pi i f \Delta}-1)\right], \quad \Delta \in \{0, \ldots, K-1\}.
\end{equation}
After summing over \(j\), the cross-terms go to \(0\) and we are left with a necessary condition for cycling:
\begin{equation}
    \sum_m s_{m/K} \phi(m/K, \Delta) \leq 0
    \label{eq:mixed-main-cont}, \quad \forall \Delta \in \{0, \ldots, K-1\}
\end{equation}
with \(s_{m/K} \propto \|c_{m/K}\|^2, \ \sum_{m=0}^{K-1} s_{m /K} = 1\), and \(s_0 < 1\). 

This is a mixture of the pure conditions based on the finite distribution \(s(f) = \sum_{m=0}^{K-1} s_{m/K} \delta_{m/K}(f)\) with support over the rationals \(\frac{1}{K}\{0, \ldots, K-1\}\). We let 
\begin{equation}
\Delta^{K-1}_+ = \left\{s \in \R_{\geq 0}^{\frac{1}{K}\{0, \ldots, K-1\}} : \sum_m s_{m/K} = 1, s_{0} < 1 \right\}
\label{eq:prob-simplex}
\end{equation}
be the probability simplex with non-empty positive support. 

First, we show that the mixed condition of \cref{eq:mixed-main-cont} is sufficient if one allows an increase in the dimension of cycling. However, we also provide an example where the mixed condition holds, a higher dimensional cycle exists, but no 2D root-of-unity cycle exists, implying that \(\Oaug \subsetneq \OCycle\). 

\begin{proposition}[Sufficiency]
\label{thm:orthogonal-planes}
    Suppose \eqref{eq:mixed-main-cont} holds for some distribution \(\{s_{m/K}\}_{m=0}^{K-1} \in \Delta^{K-1}_+\) with support \(\operatorname{supp}(s) = \{m : s_{m/K} > 0\}\) with cardinality \(r = |\operatorname{supp}(s)|\). Then, there exists a cycle in dimension \(2r\). Thus, 
    \[
    \OCycle = \bigcup_{K \geq 3} \mathcal{N}_K, 
    \]
    for
    \[
    \mathcal{N}_K = \left\{ (\gamma, \beta) : \exists s \in \Delta^{K-1}_+ \text{ s.t. } \eqref{eq:mixed-main-cont} \text{ is feasible for } s. \right\} \label{eq:mixed-main-set}
    \]
\end{proposition}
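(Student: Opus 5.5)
Given a feasible distribution $s$ for \eqref{eq:mixed-main-cont}, the plan is to realize each mode in the support in its own orthogonal complex coordinate, so that all cross terms in \eqref{eq:dft-inner-product-main-cont} vanish identically rather than only after averaging over $j$. Concretely, let $\operatorname{supp}(s)=\{m_1,\dots,m_r\}$ and work in $\Cx^{r}\cong\R^{2r}$ with standard basis $e_1,\dots,e_r$. Set $c_{m_k/K}:=\sqrt{s_{m_k/K}}\,e_k$ and all other coefficients zero, and define $z_t=\sum_k c_{m_k/K}\omega_K^{m_k t}$. The gradients are forced by \eqref{eq:grad_equiv}, i.e.\ $g_t=\sum_k c_{m_k/K}\hat M(m_k/K)\omega_K^{m_k t}$. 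Since $c_{m'/K}^*c_{m/K}=\langle e_{k'},e_k\rangle\sqrt{s s'}=0$ for $m\neq m'$, all cross terms in \eqref{eq:dft-inner-product-main-cont} drop out. Each interpolation inequality $j\to j+\Delta$ then reads
\[
f_{j+\Delta}\geq f_j+\sum_k s_{m_k/K}\phi(m_k/K,\Delta).
\]
The right-hand increment is independent of $j$.

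Choosing $f_j\equiv 0$ for all $j$, every inequality becomes exactly \eqref{eq:mixed-main-cont}, which holds by assumption. By \cref{thm:smooth-strongly-convex} (via the derivation of Appendix~\ref{app:dft-interpolation}, which is linear in the normalized gradients and points, and hence applies verbatim in $\Cx^r$ with the Hermitian inner product's real part), there then exists $f\in\Fml(\R^{2r})$ with $\nabla f(z_t)=g_t$. Because $g_t$ was defined through the heavy-ball relation, HB started at $(z_0,z_1)$ reproduces $z_t$.

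It remains to check that this is a genuine cycle in the sense of \cref{def:cycle}. First, the points are not all equal, since $s_0<1$ forces some $m_k\neq 0$. Second, the minimal period must be exactly $K$. The minimal period divides $K$; if it were a proper divisor $K'$, the $z_t$ would form a valid $K'$-cycle, and the parameters would still be in $\OCycle$ (note $K'\geq 3$, since $K'=2$ is excluded in $\Ocv(\Qml)$ and $K'=1$ is excluded by non-constancy). So for the union statement, period reduction is harmless. The dimension count is $2r$ because each complex coordinate contributes two real dimensions; if $0\in\operatorname{supp}(s)$, that mode is a constant shift and could even be dropped.

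For the equality $\OCycle=\bigcup_K\mathcal N_K$, the reverse inclusion is the construction above. The forward inclusion is the necessity already derived: any $d$-dimensional $K$-cycle, written in $\Cx^{\lceil d/2\rceil}$ via \eqref{eq:dft-cont}, yields after summing over $j$ the condition \eqref{eq:mixed-main-cont} with $s_{m/K}\propto\|c_{m/K}\|^2$, and $s_0<1$ by non-constancy. The main obstacle I expect is verifying that the complex-vector reduction of Appendix~\ref{app:dft-interpolation} holds for vector-valued coefficients, in particular that $\operatorname{Re}\langle\cdot,\cdot\rangle$ on $\Cx^r$ matches the real inner product on $\R^{2r}$ and that the quadratic terms collapse to $\sum_m\|c_{m/K}\|^2\phi$. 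Orthogonality of distinct DFT modes handles this.
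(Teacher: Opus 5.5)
Your proposal is correct and follows the paper's proof essentially verbatim. In both, each supported mode $m_k$ gets its own complex coordinate with $c_{m_k/K}=\sqrt{s_{m_k/K}}\,e_k$, which makes all cross terms vanish, and choosing zero function values reduces the interpolation conditions exactly to \eqref{eq:mixed-main-cont}; your additional remarks on the minimal period (excluding $K'\in\{1,2\}$) and on the forward inclusion via the necessity derivation fill in details the paper leaves implicit.
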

\begin{proof}
    Let \(S = \{m_1, \ldots, m_r\}\). We place each mode in its own orthogonal dimension, forming the cyclic signal:
    \[
    z_t = (\sqrt{s_{m_1/K}} \omega_K^{m_1 t}, \sqrt{s_{m_2/K}} \omega_K^{m_2 t}, \ldots, \sqrt{s_{m_r/K}} \omega_K^{m_r t}) \in \Cx^r. 
    \]
    Then, the DFT coefficients become
    \[
    c_{m_i/K} = \sqrt{s_{m_i/K}} e_{i}. 
    \]
    where \(e_i\) the \(i\)-th basis vector. The coefficients are thus orthogonal with each other: \((c_{m'/K}^* c_{m/K}) = 0\), the cross-terms vanish, and by mixed feasibility the non-cross terms are controlled by constant \(0\) function values, and \cref{eq:dft-inner-product-main-cont} holds. Because the support also contains positive frequencies, the cyclic signal \(\{z_t\}\) is not trivial (i.e. not all points are equal), and we get cycling. 
\end{proof}

We note that the mixed condition \eqref{eq:mixed-main-cont} matches the linear problem obtained by \citet[Theorem 6.13]{goujaud2025provable}, while our constructed higher-dimensional cycle is in essence a block-wise root-of-unity cycle as the one of \citet[Proposition 6.11]{goujaud2025provable}. Thus, Proposition~\ref{thm:orthogonal-planes} is a rederivation of \citep[Theorem 6.13]{goujaud2025provable} using DFT instead of circulant matrices. 

We record an AI-generated example of the mixed condition holding, \(4\)-dimensional cycling being feasible, but \(2\)-dimensional root-of-unity cycling being infeasible:
\begin{equation}
 P_1=(\kappa,\beta,\gamma)
 =\left(\frac{531}{10^4},\frac{4879}{10^4},
       \frac{32829}{12500}\right).
 \label{eq:P1}
\end{equation}
For \(K=10\), the distribution
\begin{equation}
 s_{2/10}=\frac{137}{500},\qquad
 s_{3/10}=\frac{363}{500}
 \label{eq:P1weights}
\end{equation}
strictly satisfies all nine mixed inequalities.  By \cref{thm:orthogonal-planes}, placing modes \(2\) and \(3\) in separate planes gives a cycle in \(\R^4\).

We can perform an exact exclusion of every pure roots of unity mode for every cycle length \(K\).  The argument is provided in Appendix~\ref{app:root-exclusion}: all sufficiently large lengths are excluded by one uniform bound, and the remaining finitely many primitive modes are checked exactly. Subject to that certificate, one obtains
\begin{equation}
 \Oaug\subsetneq\OCycle.
 \label{eq:strict-gap}
\end{equation}

Recall the inclusions
\[
\Oaug \subseteq \OCycle^{(2)} \subseteq \ldots \subseteq \OCycle^{(d)} \subseteq \OCycle, \ d \geq 2.
\]
The strict inclusion \(\Oaug\subsetneq\OCycle\) implies that equality breaks somewhere along the chain. A natural question is the following: Does it hold that \(\Oaug \subsetneq \OCycle^{(2)}\), or \(\OCycle^{(2)} \subsetneq \OCycle\), or both. We conjecture the following: 

\begin{conjecture}[2D-restricted cycling]
\(\Oaug = \OCycle^{(2)} \subsetneq \OCycle\). In particular, when restricting the dimension to two, we conjecture that the necessary mixed condition stops being sufficient, and we have \(\Oaug = \OCycle^{(2)}\) solely from \cref{eq:dft-inner-product-main-cont}. 
\end{conjecture}

%% file: sections/connections.tex
\section{Connection to absolute stability}\label{sec:connections}

One of the earliest works highlighting the cycling behavior of heavy-ball also happens to be one of the first connecting it with control systems: \citet{lessard2016analysis} cast the update rules of several first-order methods in the form of dynamical systems with nonlinearities, and use techniques from control theory, such as integral quadratic constraints \citep{megretski1997system}, to obtain convergence guarantees for these algorithms. Despite this connection, in the realm of characterizing instability, there exists to the best of our knowledge little connection between the works of control theory and optimization. Here, we briefly make explicit the connection between heavy-ball cycling and
the periodic Lur'e-system construction of \citet{seiler2021construction}.

To connect \eqref{eq:HB} with Lur'e systems, consider the \eqref{eq:HB} recursion on a function \(f\). Let the cyclic signal \(\{z_t\}_{t\in \Z} \subseteq \Cx\) be the periodic extension of a cyclic heavy-ball trajectory given by the \eqref{eq:HB} recursion for some parameter pair \((\gamma, \beta) \in \Ocv(\Qml)\). We first find the equivalent plant \(G\) expressing the \eqref{eq:HB} recursion. With input \(u_k\), recall that the \eqref{eq:HB} recursion reads as
\[
z_{t+1} = z_t - \gamma u_t + \beta(z_t - z_{t-1}). 
\]
Applying the Z-transform (here \(z\) refers to the Z-transform variable, not the cyclic signal) to the above equation yields
\begin{gather}
z Z(z) = Z(z) - \gamma U(z) + \beta (Z(z) - z^{-1} Z(z)) \notag \\
\Leftrightarrow U(z) = \frac{(1+\beta) - z - \beta z^{-1}}{\gamma} Z(z) = -\frac{(1-z)(\beta-z)}{\gamma z} Z(z). \label{eq:iter2nonlin}
\end{gather}
This gives a form of the inverse plant, which acts like an LTI system whose impulse response in frequency domain (i.e., for \(z\) over the unit circle) is given by the unnormalized \eqref{eq:HB} operator \(\hat M(f)=\frac{(1+\beta) - e^{2\pi i f} - \beta e^{-2\pi i f}}{\gamma}\). 
The inverse system is given by the transfer function
\[
\hat G(z) = -\frac{\gamma z}{(1-z)(\beta-z)}
\]
and has poles on \(1\) and \(\beta\). The zero-frequency pole on \(1\) corresponds to the DC component and does not affect the nonzero sinusoidal modes considered in our cycling construction; because \(\beta < 1\) the second pole is indifferent. In this form, the stepsize \(\gamma\) plays the role of the gain. 

The construction of \citet{seiler2021construction} is closely related to Theorem~\ref{thm:projection-main}: both construct a one-dimensional periodic orbit from a phase-shifted sinusoid and reduce feasibility to slope restrictions on the plant (in our case, on the \eqref{eq:HB} operator). In our formulation, we work with the inverse relation \eqref{eq:iter2nonlin}, mapping the candidate iterates to the required gradients. The slope constraints are exactly the simultaneous non-negativity inequalities used in the proof of Theorem~\ref{thm:projection-main}. Hence, our obtained angle bound in Lemma~\ref{thm:primitive-rou-feasibility}(ii) used in our projected one-dimensional cycle has the same control-theoretic interpretation as the phase condition appearing in the periodic Lur'e construction of \citet{seiler2021construction}.

%% file: sections/conclusion.tex
\section{Conclusion}\label{sec:conclusion}

We studied the cycling behavior of the heavy-ball method in smooth, strongly convex minimization. We proved the non-acceleration of heavy-ball for smooth strongly convex univariate functions. More generally, we showed that if the method cycles over the roots-of-unity, then it also admits a one-dimensional cycle after a suitable rotation and projection. Regarding general cycling, we provided a higher dimensional cycling instance which is not captured by root-of-unity cycling, and we connected our analysis with the fractional sets observed by \citet{goujaud2025open}. Finally, we placed our work among existing literature of the absolute stability of Lur'e systems. We set a new conjecture regarding cycling restricted in two dimensions, namely that it is fully characterized by root-of-unity cycling, which is a possible direction for future research. 

%% file: appendix/dft-interpolation.tex
\section{Missing interpolation step in proof of Lemma~\ref{thm:primitive-rou-feasibility}}\label{app:dft-interpolation}

The skipped steps are similar to \cite{goujaud2025provable} and are the following: using the DFT decomposed signal, we have cycling if and only if
{
\footnotesize
\begin{align*}
\exists f\in \mathcal{F}_{\mu, L}(\R^2), \forall t \in [\![K]\!], \nabla f\left(\sum_{m = 0}^{K-1} c_m \omega^{mt}\right) &= \frac{(1+\beta) \sum_{m} c_m \omega^{mt} - \sum_{m} c_m \omega^{m(t+1)} - \beta \sum_{m} c_m \omega^{m(t-1)}}{\gamma} \\
&= \sum_{m} c_m \frac{(1+\beta) - \omega^{m} - \beta \omega^{-m}}{\gamma} \omega^{mt} \\
\Leftrightarrow \exists \bar{f}\in \mathcal{F}_{0, 1}(\R^2), \forall t \in [\![K]\!], \nabla \bar{f}\left(\sum_{m = 0}^{K-1} c_m \omega^{mt}\right) &= \sum_{m} c_m \frac{(1+\beta - \mu \gamma) - \omega^{m} - \beta \omega^{-m}}{(L-\mu)\gamma} \omega^{mt} \\
&= \sum_{m} c_m M_m \omega^{mt} \\
\Leftrightarrow \exists \bar{f}^*\in \mathcal{F}_{1, \infty}(\R^2), \forall t \in [\![K]\!], \nabla \bar{f}^*\left(\sum_{m} c_m M_m \omega^{mt}\right) &= \sum_{m = 0}^{K-1} c_m \omega^{mt} \\ 
\Leftrightarrow \exists \tilde{f}\in \mathcal{F}_{0, \infty}(\R^2), \forall t \in [\![K]\!], \nabla \tilde{f}\left(\sum_{m} c_m M_m \omega^{mt}\right) &= \sum_{m} c_m (1 - M_m) \omega^{mt}. 
\end{align*}
}
At this point, we apply convex interpolation to arrive at the equivalent form of the main text. 

%% file: appendix/root-exclusion.tex
\section{Exact exclusion of all pure root-of-unity modes}
\label{app:root-exclusion}

We exclude every pure mode at the point \(P_1\) of the main text. First, we exclude coverage from any \(K \geq 10\). For a pure mode at level \(K\), let \(f=m/K\) with \(\gcd(m,K)=1\).  Summing the residual inequalities for \(\Delta\) and \(K-\Delta\) gives
\[
p(f) - p(f)^2 - q(f)^2\ge0.
\]
Similarly to the proof of Lemma~\ref{thm:primitive-rou-feasibility}, after possibly reflecting about the real axis, we can assume that \(m \leq (K-1)/2\). Then, \(q(f)>0\), and we define
\[
 r(f)=\frac{p(f)(1-p(f)) - q(f)^2}{q(f)}.
\]
The \(\Delta\) satisfying \(m\Delta\equiv1\pmod{K}\) realizes the smallest positive grid angle \(2\pi/K\).  The corresponding residual inequality can be written equivalently as \begin{equation}
 r(f)\ge\frac{\sin(2\pi/K)}{1-\cos(2\pi/K)}=\cot\left(\frac{\pi}{K}\right).
 \label{eq:cot-necessary}
\end{equation}

Now \(p(1-p)-q^2\le p(1-p)\le1/4\).  Write
\begin{equation}
 p=\frac{1+\beta-\kappa\gamma}{(1-\kappa)\gamma}-\frac{1+\beta}{(1-\kappa)\gamma}u,
 \qquad q=\frac{1-\beta}{(1-\kappa)\gamma}\sqrt{1-u^2},
 \qquad u=\cos (2\pi f).
 \label{eq:pq-u}
\end{equation}
The necessary condition \(p(1-p)-q^2\geq 0\) implies \(p\in[0,1]\), which confines \(u\) to an explicit interval. Let \(\sin_{\min}\) be the minimum of \(\sqrt{1-u^2}\) on that interval.  Then
\begin{equation}
 r(\theta)
 \le\frac{1}{4\frac{1-\beta}{(1-\kappa)\gamma}\sin_{\min}}.
 \label{eq:r-upper}
\end{equation}
At the rational point \(P_1\), to calculate \(\sin_{\min}\) we have the following:
\begin{gather*}
p=\frac{168555301 - 185987500 u}{310857801} \in [0, 1] \\
\Leftrightarrow -\frac{56921}{74395} \leq u \leq \frac{168555301}{185987500}. 
\end{gather*}
The minimum of \(\sqrt{1-u^2}\) over this interval occurs at the endpoint having the largest absolute value, which is the right endpoint. Hence, 
\[
\sin_{\min}^2 = \frac{6180460661049399}{34591350156250000}.
\]
In addition, we have that \(\cot^2(\pi/10) = 5+2\sqrt{5}\). Thus, we obtain the inequality
\begin{equation}
 \cot^2\left(\frac{\pi}{10}\right)
 -\left(\frac{1}{4\frac{1-\beta}{(1-\kappa)\gamma}\sin_{\min}}\right)^2
 =2\sqrt5-
 \frac{85559905317962889383}{26331249736617936912}>0.
 \label{eq:exact-exclusion-number}
\end{equation}
Since \(\cot(\pi/K)\) increases with \(K\), \eqref{eq:cot-necessary} is impossible for every \(K\ge10\).  The remaining primitive modes with \(K\le9\) form a finite list and were machine-checked exactly. We provide Table~\ref{tab:pure-mode-exclusion} for these cases. 

\begin{table}[t]
    \centering
    \caption{Exhaustive verification of all primitive pure modes with
    $3 \leq K \leq 9$, up to the reflection symmetry $m \leftrightarrow K-m$.
    Here $\Delta_m$ is the unique integer in $\{1,\dots,K-1\}$ satisfying
    $m\Delta_m \equiv 1 \pmod K$, and
    $A_{K,m} := p_{K,m}(1-p_{K,m}) - q_{K,m}^2$.
    In every case,
    $A_{K,m} - q_{K,m}\cot(\pi/K)<0$, equivalently
    $\phi_{K,m}(\Delta_m)>0$, so the corresponding pure mode is infeasible.}
    \label{tab:pure-mode-exclusion}
    \small
    \begin{tabular}{@{}rrrrrr@{}}
        \toprule
        $K$
        & $m$
        & $\Delta_m$
        & $A_{K,m}$
        & $q_{K,m}\cot(\pi/K)$
        & $A_{K,m}-q_{K,m}\cot(\pi/K)$ \\
        \midrule
        3 & 1 & 1 &  0.101657820 & 0.102961064 & $-0.001303244$ \\
        4 & 1 & 1 &  0.205813010 & 0.205922128 & $-0.000109119$ \\
        5 & 1 & 1 &  0.191293486 & 0.269555566 & $-0.078262080$ \\
        5 & 2 & 3 & $-0.041604599$ & 0.166594501 & $-0.208199100$ \\
        6 & 1 & 1 &  0.152186257 & 0.308883193 & $-0.156696936$ \\
        7 & 1 & 1 &  0.114644817 & 0.334312475 & $-0.219667658$ \\
        7 & 2 & 4 &  0.178944050 & 0.416880838 & $-0.237936788$ \\
        7 & 3 & 5 & $-0.095868914$ & 0.185529427 & $-0.281398340$ \\
        8 & 1 & 1 &  0.083760067 & 0.351531062 & $-0.267770994$ \\
        8 & 3 & 3 &  0.012302067 & 0.351531062 & $-0.339228995$ \\
        9 & 1 & 1 &  0.059339548 & 0.363667631 & $-0.304328082$ \\
        9 & 2 & 5 &  0.205071765 & 0.557171135 & $-0.352099370$ \\
        9 & 4 & 7 & $-0.120318113$ & 0.193503504 & $-0.313821617$ \\
        \bottomrule
    \end{tabular}
\end{table}